\documentclass[12pt]{article}
\usepackage{amssymb}
\usepackage{amsfonts}
\usepackage{amsmath}

\setcounter{MaxMatrixCols}{10}

\newtheorem{theorem}{Theorem}[section]

\newtheorem{conjecture}[theorem]{Conjecture}
\newtheorem{corollary}[theorem]{Corollary}

\newtheorem{definition}[theorem]{Definition}

\newtheorem{lemma}[theorem]{Lemma}

\newenvironment{proof}[1][Proof]{\noindent\textbf{#1.} }{\ \rule{0.5em}{0.5em}}
\input{tcilatex}
\begin{document}

\title{Matrix group actions on product of spheres and Zimmer's program}
\author{Shengkui Ye \\
Xi'an Jiaotong-Liverpool University}
\maketitle

\begin{abstract}
Let $\mathrm{SL}_{n}(\mathbb{Z})$ be the special linear group over integers
and $M^{r}=S^{r_{1}}\times S^{r_{2}},$ $T^{r_{1}}\times S^{r_{2}}$ , or $%
T^{r_{0}}\times S^{r_{1}}\times S^{r_{2}},$ products of spheres and tori. We
prove that any group action of $\mathrm{SL}_{n}(\mathbb{Z})$ on $M^{r}$ by
diffeomorphims or piecewise linear homeomorphisms is trivial if $r<n-1.$
This confirms a conjecture on Zimmer's program for these manifolds.
\end{abstract}

\section{Introduction}

The special linear group $\mathrm{SL}_{n}(\mathbb{Z})$ acts on the Euclidean
space $\mathbb{R}^{n}$ by linear transformations. There is an induced action
on the sphere $S^{n-1}$ ($\subseteq \mathbb{R}^{n}$) by $x\longmapsto
Ax/\Vert x\Vert $ for $x\in S^{n-1}$ and $A\in \mathrm{SL}_{n}(\mathbb{Z})$.
It is believed that this action is minimal in the following sense.

\begin{conjecture}
\label{conj}Any action of a finite-index subgroup of $\mathrm{SL}_{n}(%
\mathbb{Z})$ $(n\geq 3)$ on a compact $r$-manifold by diffeomorphisms
factors through a finite group action if $r<n-1.$
\end{conjecture}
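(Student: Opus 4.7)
The plan is to attack Conjecture \ref{conj} via Zimmer's cocycle superrigidity applied to the derivative cocycle, exploiting the fact that $\mathrm{SL}_{n}(\mathbb{R})$ admits no nontrivial real linear representation in dimension below $n$. Fix a finite-index subgroup $\Gamma \leq \mathrm{SL}_{n}(\mathbb{Z})$ and a smooth action $\alpha:\Gamma \to \mathrm{Diff}(M)$ on a compact $r$-manifold with $r<n-1$; the goal is to show $\alpha(\Gamma)$ is finite. The first step is the suspension construction: form $X=(\mathrm{SL}_{n}(\mathbb{R})\times M)/\Gamma$, where $\Gamma$ acts by $(g,x)\cdot\gamma=(g\gamma,\alpha(\gamma^{-1})x)$. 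This space carries a smooth left action of $G=\mathrm{SL}_{n}(\mathbb{R})$, and pairing the finite Haar measure on $G/\Gamma$ with any $\Gamma$-invariant Borel probability measure $\nu$ on $M$ (obtained, for instance, as a weak-$*$ limit of Ces\`aro averages) yields a $G$-invariant measure on $X$. The fiberwise derivative then defines a measurable cocycle $c:G\times X\to \mathrm{GL}(r,\mathbb{R})$.

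Next, I would apply Zimmer's cocycle superrigidity to $c$: it is measurably cohomologous to a product of a continuous homomorphism $\pi:G\to \mathrm{GL}(r,\mathbb{R})$ with a cocycle valued in a compact subgroup of $\mathrm{GL}(r,\mathbb{R})$. Since every nontrivial real representation of $\mathrm{SL}_{n}(\mathbb{R})$ has dimension at least $n>r+1$, the hypothesis forces $\pi$ to be trivial, so $c$ is cohomologous to a compact-valued cocycle. Consequently every $\alpha(\gamma)$ has vanishing Lyapunov spectrum with respect to $\nu$, and the Margulis--Ruelle inequality gives zero metric entropy. The next step is to promote this measurable rigidity to the preservation of an honest continuous (and, after averaging, smooth) Riemannian metric $g$ on $M$, so that $\alpha(\Gamma)$ sits inside the compact Lie group $\mathrm{Isom}(M,g)$. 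Because $\Gamma$ has Kazhdan's property (T) for $n\geq 3$, its image has compact closure; by Margulis's superrigidity applied to the associated finite-dimensional representation of $\Gamma$, this image is in fact finite, completing the argument.

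The hard part will be bridging the gap between measurable and topological/smooth conclusions. Zimmer's cocycle superrigidity delivers only almost-everywhere information with respect to one invariant measure, while the conjecture asserts triviality at every point of $M$. Two distinct issues arise: first, one must produce a $\Gamma$-invariant measure whose support captures the full dynamics, a nontrivial matter when the action is not volume-preserving, since all invariant measures may be singular and concentrated on low-dimensional subsets, so that vanishing of Lyapunov exponents on $\nu$ does not immediately constrain the off-support dynamics; second, one must upgrade measurable cohomological triviality of $c$ to the existence of a \emph{continuous} invariant Riemannian metric, not merely a measurable one. Current partial resolutions of the Zimmer program (notably Brown--Fisher--Hurtado for cocompact higher-rank lattices) handle the second step via strong property (T) and the first via intricate Lyapunov-exponent and entropy-rigidity estimates. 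Throughout, the dimension constraint $r<n-1$ is indispensable, as it is what guarantees that no nontrivial representation of $\mathrm{SL}_{n}(\mathbb{R})$ embeds into $\mathrm{GL}(r,\mathbb{R})$ and forces every Zariski-dense algebraic part of the derivative cocycle to collapse.
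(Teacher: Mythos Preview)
The statement you are addressing is Conjecture~\ref{conj}, and the paper does \emph{not} prove it; it is stated as an open conjecture. What the paper establishes are partial results (Theorems~\ref{main}, \ref{main3}, \ref{main2} and Corollary~\ref{torus}), which confirm the conjecture only for the full group $\mathrm{SL}_{n}(\mathbb{Z})$ itself (not arbitrary finite-index subgroups) and only for specific manifolds: those whose $\mathbb{Z}/2$ Betti numbers sum to at most~$4$, together with products of tori and spheres built from these via Theorem~\ref{main2}. The paper's method is entirely different from yours: it uses Smith theory for involutions, Floyd's inequality and Borel's dimension formula for fixed-point sets of elementary abelian $p$-groups, an induction on the dimension of the manifold, and the normal subgroup structure of $\mathrm{SL}_{n}(\mathbb{Z})$ (via the congruence subgroup property). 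No ergodic theory, no cocycle superrigidity, and no invariant measures appear anywhere in the paper.

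Your proposal sketches the classical Zimmer-program strategy, and you have correctly located the genuine obstruction: the step where you ``promote this measurable rigidity to the preservation of an honest continuous Riemannian metric'' cannot be carried out with the tools you list. Cocycle superrigidity yields only almost-everywhere conclusions with respect to a single invariant measure, and without a volume-preservation hypothesis there is no reason such a measure sees the whole manifold; even in the volume-preserving case, upgrading a measurable invariant metric to a continuous one is the heart of the problem and is exactly what the Brown--Fisher--Hurtado machinery (via strong property~(T)) was later developed to handle. So what you have written is not a proof of the conjecture but an accurate description of the standard strategy together with a correct identification of where it stalls. The paper under discussion sidesteps this difficulty completely by restricting to manifolds with very small total Betti number, where finite-group Smith theory and elementary arguments with conjugate involutions in $\mathrm{SL}_{n}(\mathbb{Z})$ suffice.
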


This conjecture was formulated by Farb and Shalen \cite{fs}, which is
related to Zimmer's program \cite{z3}. Conjecture \ref{conj} could be a
special case of a general conjecture in Zimmer's program, in which the
special linear group is replaced by an arbitrary irreducible lattice $\Gamma 
$ in a semisimple Lie group $G$ of $\mathbb{R}$-rank at least 2, and the
integer $n$ is replaced by a suitable integer $h(G).$ Some forms of
Conjecture \ref{conj} has been discussed by Weinberger \cite{sw}. These
conjectures are part of a program to generalize the Margulis Superrigidity
Theorem to a nonlinear context.

In general, it is difficult to prove this conjecture. Usually, we have to
assume either that the group action preserves additional geometric
structures or that the lattices themselves are special. The following is an
incomplete list of some results in this direction. For more details of
Zimmer's program and related topics, see survey articles of Zimmer and
Morris \cite{zm}, Fisler \cite{fi} and Labourie \cite{la}.

When $r=1$ and $M=S^{1}$, Ghys \cite{gh} and Burger-Monod \cite{bm} show
that every $C^{1}$ action of a lattice with $\mathbb{R}$-rank $\geq 2$ on $%
S^{1}$ factors through a finite group action. Witte \cite{wi} proves that
Conjecture \ref{conj} is true for an arithmetic lattice $\Gamma $ with $%
\mathbb{Q}$-rank$(\Gamma )\geq 2$. He actually proves that the topological
version is true as well.

When $r=2,$ the group action is smooth real-analytic and $M$ is a compact
surface other than the torus or Klein bottle, Farb and Shalen \cite{fs}
prove that Conjecture \ref{conj} is true for $n\geq 5$ (more generally for $%
2 $-big lattices). When the group action is smooth real-analytic and
volume-preserving, they also show that this result could be extended to all
compact surfaces.

Polterovich (see Corollary 1.1.D of \cite{Po}) proves that if $n\geq 3$,
then any action by $\mathrm{SL}(n,\mathbb{Z})$ on a closed surface other
than the sphere $S^{2}$ and the torus $T^{2}$ by area preserving
diffeomorphisms factors through a finite group action. When $r=2$ and the
group action is by area preserving diffeomorphisms, Franks and Handel \cite%
{fh} prove that Conjecture \ref{conj} is true for an almost simple group
containing a subgroup isomorphic to the three-dimensional integer Heisenberg
group (eg. any finite-index subgroup of $\mathrm{SL}_{n}(\mathbb{Z})$ for $%
n\geq 3$). When $M$ is a compact surface with nonempty boundary, Ye \cite%
{ye1} proves Conjecture \ref{conj} is true for $\mathrm{SL}_{n}(\mathbb{Z})$
($n\geq 5).$ The conjecture is still open for general smooth actions on
closed $2$-dimensional manifolds.

Bridson and Vogtmann \cite{bv} prove Conjecture \ref{conj} for $\mathrm{SL}%
_{n}(\mathbb{Z})$ and $M=S^{r},$ the sphere of dimension $r.$ Weinberger 
\cite{we} shows that when $r<n,$ any group action of $\mathrm{SL}_{n}(%
\mathbb{Z})$ on the lower dimensional torus $T^{r}$ by diffeomorphisms is
trivial, i.e., Conjecture \ref{conj} holds for the group $\mathrm{SL}_{n}(%
\mathbb{Z})$ itself and $M=T^{r}$. The aim of this article is to provide
more manifolds satisfying Conjecture \ref{conj}.

Our first result is to prove Conjecture \ref{conj} for manifolds with few
Betti numbers.

\begin{theorem}
\label{main}Let $X$ be an $r$-dimensional $(r\geq 1)$ orientable manifold
with Betti numbers $b_{i}(X;\mathbb{Z}/2).$ Suppose that $\Sigma
_{i=0}^{r}b_{i}(X;\mathbb{Z}/2)\leq 4$. When $r<n-1,$ any group action of $%
\mathrm{SL}_{n}(\mathbb{Z})$ on $X$ is trivial if the group action is by

\begin{enumerate}
\item[(i)] diffeomorphisms; or

\item[(ii)] piecewise linear (PL) homeomorphisms; or

\item[(iii)] homeomorphisms and $r\leq 4.$
\end{enumerate}
\end{theorem}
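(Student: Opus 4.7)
The approach will adapt Bridson--Vogtmann's sphere strategy \cite{bv}: inside $\mathrm{SL}_n(\mathbb{Z})$ work with the elementary abelian $2$-subgroup $D$ of diagonal $\pm 1$ matrices of determinant $1$ (which has rank $n-1$) together with the signed permutation matrices that normalize it, use Smith theory to force a non-central element of $D$ to act trivially on $X$, and then use the normal subgroup structure of $\mathrm{SL}_n(\mathbb{Z})$ to upgrade triviality to the entire group. Let $\phi$ denote the given action.

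For each involution $g \in D$, mod-$2$ Smith theory gives
\[
\sum_i b_i(X^g;\mathbb{Z}/2) \leq \sum_i b_i(X;\mathbb{Z}/2) \leq 4,
\]
so either $\phi(g) = \mathrm{id}_X$ or $\dim X^g < r$. In cases (i) and (ii) the fixed sets are submanifolds, while in case (iii) the assumption $r \leq 4$ ensures that topological Smith theory delivers the same cohomological inequalities, with fixed sets interpreted as $\mathbb{F}_2$-cohomology manifolds. Iterating Smith theory along a chain of index-$2$ subgroups of $D$ and using the fact that $\sum b_i(X;\mathbb{Z}/2) \leq 4$ restricts each fixed set to have at most four mod-$2$ cohomology classes, I would produce a non-central element $g_0 \in D \cap \ker \phi$. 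After conjugation by a permutation matrix one may arrange $g_0 = \mathrm{diag}(\epsilon_1,\ldots,\epsilon_n)$ with $\epsilon_1 = -1$ and $\epsilon_2 = +1$.

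A direct computation gives $g_0\, E_{12}(1)\, g_0^{-1} = E_{12}(-1)$ and hence $[E_{12}(1), g_0] = E_{12}(2) \in \ker\phi$; permuting indices shows every $E_{ij}(2)$ lies in $\ker\phi$, so the principal congruence subgroup $\Gamma(2)$ is contained in $\ker\phi$ and the induced action descends to the finite group $\mathrm{SL}_n(\mathbb{F}_2)$. Applying the same Smith-theoretic reasoning inside $\mathrm{SL}_n(\mathbb{F}_2)$, which for $n \geq 3$ is simple and contains elementary abelian $2$-subgroups of rank $n-1$ (for instance $\{I + e_1 v^T : v \in \mathbb{F}_2^{n-1}\}$), produces a nontrivial element of the induced kernel, which by simplicity must be the whole group. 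Hence $\phi$ is trivial.

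The main obstacle is the Smith-theoretic rank argument used to produce the non-central trivially-acting involution. For effective $(\mathbb{Z}/2)^k$-actions on a general compact manifold of dimension $r$ the crude bound is only $k \leq r + 1$, and in the boundary case $r = n-2$ this coincides with $\mathrm{rk}(D) = n-1$, giving no forced kernel. One must therefore use the bound $\sum b_i(X;\mathbb{Z}/2) \leq 4$ essentially, running Borel's formula and Poincar\'e duality through the chain of fixed sets to force the effective rank down to $\leq r$, and to rule out the possibility that the kernel contains only the central element $-I$ when $n$ is even. This is also the step where case (iii) causes the most trouble: fixed sets of topological involutions on $4$-manifolds need not be manifolds, and one must work throughout with $\mathbb{F}_2$-cohomology manifolds rather than genuine submanifolds.
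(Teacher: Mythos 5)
Your proposal stalls exactly at the step you flag as the ``main obstacle,'' and that obstacle cannot be removed by the refinement you suggest. You hope to run Smith theory through a chain of index-$2$ subgroups of the diagonal group $D\cong(\mathbb{Z}/2)^{n-1}$ and ``force the effective rank down to $\leq r$'' using $\sum b_i(X;\mathbb{Z}/2)\leq 4$. But the sharp bound obtainable this way is effective rank $\leq r+2$ (this is Mann's theorem, which appears in the paper as Lemma \ref{eff} and is the engine of Theorem \ref{main3}), and it is sharp on precisely the manifolds the theorem is about: $S^{r_1}\times S^{r_2}$ carries an effective action of $(\mathbb{Z}/2)^{r_1+1}\times(\mathbb{Z}/2)^{r_2+1}=(\mathbb{Z}/2)^{r+2}$ by coordinate sign changes, and this manifold has $\sum b_i=4$. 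So for $r=n-3$ or $r=n-2$ no amount of Poincar\'e duality and Borel-formula bookkeeping applied to the abstract elementary abelian group $D$ (or, at the last stage, to the rank-$(n-1)$ transvection group inside $\mathrm{SL}_n(\mathbb{F}_2)$) can produce a kernel element; your argument as written only yields triviality in a range like $r<n-3$, which is the paper's Theorem \ref{main3}, not Theorem \ref{main} with $r<n-1$. The issue of ruling out that only $-I$ dies (for $n$ even) is likewise left unresolved.

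The missing idea, and the paper's actual mechanism for reaching $r<n-1$, is an induction on the dimension $r$ that uses how $D$ sits inside $\mathrm{SL}_n(\mathbb{Z})$, not just its rank. One takes an involution $A_1$ with $\mathrm{Fix}(A_1)\neq\emptyset$; since the action is orientation-preserving (perfectness) Bredon's parity lemma (Lemma \ref{Bredon}) forces $\mathrm{Fix}(A_1)$ to have codimension $\geq 2$, and in the smooth/PL cases (or for $r\leq 4$ topologically) it is again a manifold with $\sum b_i\leq 4$ by Floyd's inequality. The centralizer of $A_1$ contains a copy of $\mathrm{SL}_{n-2}(\mathbb{Z})$ acting on this lower-dimensional fixed set, which is trivial by the induction hypothesis (since $\dim\mathrm{Fix}(A_1)\leq r-2<(n-2)-1$); then Borel's dimension formula (Lemma \ref{borel}) applied to $\langle A_1,c\rangle$ with $c=\mathrm{diag}(1,1,-1,-1,1,\dots,1)$ shows $\dim_2\mathrm{Fix}(A_1c)=r$, so $A_1c$ acts trivially by invariance of domain. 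Only after this does the reduction to $\mathrm{SL}_n(\mathbb{Z}/2)$ occur, and there the paper again avoids a pure rank argument, using instead that all nontrivial transvections in the group generated by $\{e_{1i}(1)\}$ are conjugate (Lemma \ref{finite conj}), the non-freeness from Lemma \ref{mann}, the centralizer $\mathrm{SL}_{n-2}(\mathbb{Z}/2)\ltimes(\mathbb{Z}/2)^{n-2}$ acting on a fixed-point set of at most four points (Lemma \ref{sym}), and Borel's formula once more. Your congruence-subgroup bookkeeping ($[E_{12}(1),g_0]=E_{12}(2)$, etc.) is fine as far as it goes, but without the dimension-induction via the centralizer the proof does not close in the stated range.
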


When the group action is by diffeomorphisms (piecewise linear
homeomorphisms, resp.), we always assume that $X$ is smooth (piecewise
linear, resp.). The proof of Theorem \ref{main} is to do induction on the
dimension $r$ by studying the action of involutions on $X.$ The assumptions
of various actions are to make sure that the fixed point set of an
orientation-preserving involution is a manifold and is of even codimension.

By Theorem \ref{main}, Conjecture \ref{conj} holds for actions of $\mathrm{SL%
}_{n}(\mathbb{Z})$ on manifolds $X$ of the following type:

\begin{itemize}
\item $X=S^{r},$ the sphere, or $\mathbb{R}^{r},$ the Euclidean space. This
is first proved by Bridson and Vogtmann \cite{bv}.

\item $X=S^{r_{1}}\times S^{r_{2}},$ the product of two spheres.

\item $X=\mathbb{R}^{r_{1}}\times S^{r_{2}},$ the product of a sphere and an
Euclidean space.

\item $X=\mathbb{R}^{r_{0}}\times S^{r_{1}}\times S^{r_{2}},$ the product of
two spheres and an Euclidean space.
\end{itemize}

We study matrix group actions on manifolds with few Betti numbers by
homeomorphisms. For such topological actions, we have the following.

\begin{theorem}
\label{main3}Let $X$ be an $r$-dimensional $(r\geq 1)$ homology manifold
over $\mathbb{Z}/p$ $(p$ a prime) with Betti numbers $b_{i}(X;\mathbb{Z}/p).$
Suppose that $\Sigma _{i=0}^{r}b_{i}(X;\mathbb{Z}/p)\leq 4$. We have the
following.

\begin{enumerate}
\item[(i)] Assume $p=2.$ When $r<n-3$ if $n$ is odd, or $r<n-4$ if $n$ is
even, any group action of $\mathrm{SL}_{n}(\mathbb{Z})$ on $X$ by
homeomorphisms is trivial$.$

\item[(ii)] Assume $p\ $is odd. When $r<n-2$ if $n$ is even, or $r<n-3$ if $%
n $ is odd, any group action of $\mathrm{SL}_{n}(\mathbb{Z})$ on $X$ by
homeomorphisms is trivial$.$

\item[(iii)] When $p=2$ and $r<n-3,$ any group action of $\mathrm{SL}_{n}(%
\mathbb{Z}/2)$ on $X$ by homeomorphisms is trivial$.$ When $p\ $is odd and $%
r<2n-4,$ any group action of $\mathrm{SL}_{n}(\mathbb{Z}/p)$ on $X$ by
homeomorphisms is trivial$.$
\end{enumerate}
\end{theorem}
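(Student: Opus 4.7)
The plan is to mirror the inductive framework of Theorem~\ref{main}, but to work throughout in the category of $\mathbb{Z}/p$-homology manifolds and to replace smooth/PL fixed-point sets of involutions with Smith-theoretic fixed-point sets of $p$-elements. The core input is classical Smith theory: if a finite $p$-group $P$ acts on a $\mathbb{Z}/p$-homology $r$-manifold $X$ with $\sum_i b_i(X;\mathbb{Z}/p)\le 4$, then every component of $X^P$ is itself a $\mathbb{Z}/p$-homology manifold whose total mod-$p$ Betti number is still at most $4$, with codimension at least $1$ in the nontrivial case for $p=2$ and of even codimension for $p$ odd. This parity dichotomy in the codimension is the source of the gap between (i) and (ii).

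For (i) and (ii), I would argue by induction on $r$, at each step selecting an element $t\in\mathrm{SL}_n(\mathbb{Z})$ of order $p$ whose centralizer contains a large block copy of $\mathrm{SL}_{n'}(\mathbb{Z})$. For $p=2$ take $t=\mathrm{diag}(-1,-1,1,\ldots,1)$, whose centralizer contains $\mathrm{SL}_{n-2}(\mathbb{Z})$ in the last $n-2$ coordinates; for $p$ odd take the $p$-cycle permutation on the first $p$ coordinates, which has sign $+1$ because $p$ is odd and whose centralizer contains $\mathrm{SL}_{n-p}(\mathbb{Z})$. If the full action is nontrivial then, after conjugating, one may assume $t$ itself acts nontrivially, whence Smith theory yields $\dim X^t<r$, with a drop of at least $2$ when $p$ is odd. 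The induction hypothesis applied to the centralizer action on $X^t$---whose total mod-$p$ Betti number is still at most $4$---forces triviality there; the parity of $n$ enters only through the parity-dependent cutoff of the hypothesis being invoked at the inductive step. Finally, a generation argument using that $\mathrm{SL}_n(\mathbb{Z})$ is generated by a small family of such centralizer blocks together with a few conjugates of $t$ propagates the triviality from $X^t$ back to $X$.

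Part (iii) exploits the fact that the finite group $\mathrm{SL}_n(\mathbb{Z}/p)$ contains much larger elementary abelian $p$-subgroups, notably coming from its unipotent radical; applied to these, Smith theory collapses dimension much more quickly, which for $p$ odd is what yields the nearly-doubled bound $r<2n-4$, while for $p=2$ the absence of an even-codimension bonus gives only $r<n-3$. The main obstacle I expect is the propagation step: in Theorem~\ref{main} one could use the smooth or PL normal bundle of a fixed submanifold to transport triviality outward, but in the present homology-manifold setting no such local data are available, so the step from triviality on $X^t$ to triviality on $X$ must be carried out purely algebraically, most likely by showing that the centralizer of $t$ together with a few conjugates of $t$ normally generates the acting group. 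It is the cost of this algebraic propagation---how many Smith layers one must peel off before the induction hypothesis becomes applicable, as controlled by the parity of $n$ and by $p$---that accounts for the worse constants ($-2$, $-3$, $-4$) in the dimension bounds compared with Theorem~\ref{main}.
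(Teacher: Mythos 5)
Your outline diverges from the paper's actual argument, and in its present form it has genuine gaps. The paper does not run the induction-on-$r$ scheme of Theorem \ref{main} at all here; instead it first proves an effectivity bound (Lemma \ref{eff}): a rank-$d$ elementary abelian $p$-group cannot act effectively on an $r$-dimensional homology manifold over $\mathbb{Z}/p$ with total Betti number at most $4$ when $r<d-2$ (for $p=2$) or $r<2d-2$ (for $p$ odd). That lemma is proved by induction on $d$, and the delicate step is exactly the one you flag as an obstacle: knowing some element acts trivially on a fixed-point set is upgraded to triviality on all of $X$ by choosing a maximal fixed set, applying Borel's dimension formula (Lemma \ref{borel}) to a rank-two subgroup, and invoking invariance of domain to get a full-dimensional fixed set. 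Theorem \ref{main3} then follows softly by feeding explicit large elementary abelian subgroups into Lemma \ref{eff} --- the diagonal $\pm1$ subgroup $(\mathbb{Z}/2)^{n-1}$ of $\mathrm{SL}_n(\mathbb{Z})$, a rank-$[\frac{n}{2}]$ elementary abelian $3$-group of block copies of an order-$3$ matrix, and the row group $\langle e_{1i}(1)\rangle\cong(\mathbb{Z}/p)^{n-1}$ in $\mathrm{SL}_n(\mathbb{Z}/p)$ --- and then using normal generation: a noncentral involution acting trivially forces the action to factor through $\mathrm{SL}_n(\mathbb{Z}/2)$, and simplicity of $\mathrm{SL}_n(\mathbb{Z}/2)$, resp. $\mathrm{PSL}_n(\mathbb{Z}/p)$, finishes. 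The parity of $n$ in (i)--(ii) is purely a center issue (when $n$ is even one must rule out that the only element acting trivially is $-I$, costing one rank), not a ``parity-dependent cutoff of the inductive hypothesis.''

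Concretely, three steps of your plan do not work as stated. First, ``if the action is nontrivial then after conjugating one may assume $t$ acts nontrivially'' is false: the kernel of the action is normal, so all conjugates of $t$ act trivially if $t$ does; in that case the action factors through $\mathrm{SL}_n(\mathbb{Z}/2)$ (or a congruence quotient), and this branch needs the separate finite-group argument of part (iii), which your sketch does not connect to (i)--(ii). Second, your propagation step is the real missing idea, not a technicality: triviality of the centralizer block $\mathrm{SL}_{n-2}(\mathbb{Z})$ on $\mathrm{Fix}(t)$ only tells you those elements fix a subset of $X$; no normal-generation statement converts that into triviality on $X$. You need the Borel formula plus invariance-of-domain mechanism to manufacture an element whose fixed set has full dimension, and without it the induction never closes. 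Third, the numerology does not come out: for odd $p$ your induction peels off $\mathrm{SL}_{n-p}(\mathbb{Z})$ and at least codimension $2$ per step, which would give bounds degrading with $p$, whereas the theorem's bounds in (ii) are $r<n-2$ or $r<n-3$ independently of $p$ (obtained in the paper from the rank-$[\frac{n}{2}]$ elementary abelian $3$-subgroup), and the $n$-even penalty in (i) comes from the central element $-I$, which your argument never addresses. Your part (iii) is closest in spirit to the paper (the rank-$(n-1)$ unipotent row group plus simplicity of $\mathrm{PSL}_n(\mathbb{Z}/p)$), but it still presupposes the quantitative effectivity lemma, which is the actual content and is not supplied by ``Smith theory collapses dimension quickly.''
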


We now study the inheritance of Conjecture \ref{conj} for covering spaces.
We obtain the following result.

\begin{theorem}
\label{main2}\bigskip Let $p:M^{\prime }\rightarrow M$ be a universal
covering of connected manifolds with the finitely generated abelian group $%
\mathbb{Z}^{k}$ as the deck transformation group. Suppose that

\begin{enumerate}
\item[(i)] any group action of $\mathrm{SL}_{n}(\mathbb{Z})$ on $M^{\prime }$
by homeomorphisms (resp. diffeomorphisms, PL homeomorphisms) is trivial$;$

\item[(ii)] $k\leq n-1.$
\end{enumerate}

Then any group action of $\mathrm{SL}_{n}(\mathbb{Z})$ on $M$ by
homeomorphisms (resp. diffeomorphisms, PL homeomorphisms) is trivial.
\end{theorem}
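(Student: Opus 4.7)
The plan is to lift a given action $\phi:\Gamma\to\mathrm{Homeo}(M)$, with $\Gamma:=\mathrm{SL}_n(\mathbb{Z})$, to an action on the universal cover $M'$ and then invoke hypothesis (i). Since $M'$ is simply connected, the standard lifting criterion shows that each $\phi(g)$ admits a lift to $\mathrm{Homeo}(M')$, unique up to composition with an element of the deck group $\mathbb{Z}^k$. Collecting all such lifts produces a subgroup $\tilde\Gamma\subseteq\mathrm{Homeo}(M')$ fitting in the short exact sequence
$$1\longrightarrow\mathbb{Z}^k\longrightarrow\tilde\Gamma\longrightarrow\Gamma\longrightarrow 1,$$
and the conjugation action on the abelian kernel induces a homomorphism $\rho:\Gamma\to\mathrm{GL}_k(\mathbb{Z})$, which is the natural action of $\Gamma$ on $\pi_1(M)\cong H_1(M;\mathbb{Z})\cong\mathbb{Z}^k$.

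The heart of the argument is splitting this extension. Since $k\leq n-1<n$, the homomorphism $\rho$ realises a linear $\Gamma$-action on $T^k$, to which Weinberger's theorem \cite{we} applies, forcing this action to factor through a finite quotient; thus $\rho$ has finite image. Combined with the perfectness of $\Gamma$ for $n\geq 3$ and the fact that the finite simple quotients of $\Gamma$ (which by the congruence subgroup property are essentially $\mathrm{PSL}_n(\mathbb{F}_p)$) admit no faithful integral representation of dimension less than $n$, one concludes that $\rho$ is in fact trivial, so the extension is central. For $n\geq 3$ one has $H_1(\Gamma;\mathbb{Z})=0$ and $H_2(\Gamma;\mathbb{Z})$ finite (reflecting $K_2(\mathbb{Z})\cong\mathbb{Z}/2$), so the universal coefficient theorem gives $H^2(\Gamma;\mathbb{Z}^k)=0$. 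The central extension therefore splits, yielding a section $s:\Gamma\to\tilde\Gamma$.

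The composition $\Gamma\stackrel{s}{\to}\tilde\Gamma\hookrightarrow\mathrm{Homeo}(M')$ is an action of $\Gamma$ on $M'$ by homeomorphisms. By hypothesis (i) it is trivial in the sense of Conjecture \ref{conj}, so a finite-index subgroup of $\Gamma$ acts as the identity on $M'$; projecting via $p$, the same subgroup acts as the identity on $M$, so the original $\phi$ factors through a finite quotient of $\Gamma$, which is the desired conclusion. The diffeomorphism and PL cases proceed identically, using that lifts of diffeomorphisms (respectively PL homeomorphisms) across a covering respect the additional structure. The principal obstacle is showing that $\rho$ is outright trivial rather than merely of finite image: Weinberger's theorem supplies the latter, and the remaining step requires low-dimensional representation-theoretic input on the finite quotients of $\mathrm{SL}_n(\mathbb{Z})$; if one prefers to bypass this, one may instead pass to $\Gamma_0:=\ker\rho$, split the resulting central extension after a further finite-index restriction using the finiteness of $H^2$ of arithmetic subgroups of $\mathrm{SL}_n(\mathbb{Z})$, and then deduce the conclusion with the help of Margulis' normal subgroup theorem.
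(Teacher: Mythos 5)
Your overall strategy is the same as the paper's: lift the action to the universal cover by splitting the extension of the acting group by the deck group $\mathbb{Z}^{k}$, using (a) triviality of the induced action on $\mathbb{Z}^{k}$ and (b) the vanishing of the relevant $H^{2}$ with $\mathbb{Z}^{k}$-coefficients coming from $H_{1}(\mathrm{SL}_{n}(\mathbb{Z}))=0$ and $H_{2}(\mathrm{SL}_{n}(\mathbb{Z}))$ finite (van der Kallen \cite{van}), and then invoke hypothesis (i) upstairs. The one genuine divergence is where the splitting happens: the paper (Theorem \ref{cover}) splits $1\rightarrow \pi \rightarrow G^{\prime }\rightarrow Q\rightarrow 1$ over the image $Q=\mathrm{Im}\,f$, and must therefore verify $H^{2}(Q;\pi )=0$ for every possible quotient $Q$, which is why its proof of Theorem \ref{main2} invokes the congruence subgroup property \cite{bsm}, the Dennis--Stein computation of $H_{2}(\mathrm{SL}_{n}(\mathbb{Z}/m))$ \cite{ds}, and a spectral-sequence argument. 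You instead split the extension pulled back to $\Gamma =\mathrm{SL}_{n}(\mathbb{Z})$ itself; a section of the pullback still gives a homomorphism $\Gamma \rightarrow G^{\prime }\subseteq \mathrm{Homeo}(M^{\prime })$ covering the given action, so only $H^{2}(\Gamma ;\mathbb{Z}^{k})=0$ is needed. This is a legitimate and in fact leaner route, since the congruence subgroup property and the finite-quotient cohomology computations are bypassed.

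Two blemishes, one of which matters for the statement you were asked to prove. First, in this paper ``trivial'' means literally trivial (every group element acts as the identity), not ``factors through a finite group action'' as in Conjecture \ref{conj}. Hypothesis (i) therefore makes every lifted homeomorphism the identity of $M^{\prime }$, and since $p$ is surjective this forces $\phi (g)=\mathrm{id}_{M}$ for all $g$; your closing claim that one only obtains factorization through a finite quotient both misreads (i) and falls short of the stated conclusion, although the full conclusion is immediate from your own construction. Second, your detour to show $\rho $ trivial is shakier than necessary: the image of $\rho $ is a finite quotient of $\Gamma $ that need not be simple, so the asserted absence of small faithful integral representations of $\mathrm{PSL}_{n}(\mathbb{F}_{p})$ does not by itself rule it out (a faithful representation of the image does not restrict to a faithful representation of a simple quotient). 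The clean input, which the paper cites, is that every homomorphism $\mathrm{SL}_{n}(\mathbb{Z})\rightarrow \mathrm{GL}_{k}(\mathbb{Z})$ with $k<n$ is trivial (\cite{ye}, Corollary 1.11, or \cite{we}); equivalently, reading Weinberger's theorem with the literal meaning of ``trivial,'' the linear action on $T^{k}$ induced by $\rho $ is trivial and hence $\rho $ is trivial outright, so your ``principal obstacle'' and the fallback via Margulis' normal subgroup theorem are unnecessary.
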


The proof of Theorem \ref{main2} is to lift the action on $M$ to the
covering space $M^{\prime },$ using cohomology of groups.

The linear group action of $\mathrm{SL}_{n}(\mathbb{Z})$ on the Euclidean
space $\mathbb{R}^{n}$ induces an action on the $n$-dimensional torus $T^{n}=%
\mathbb{R}^{n}/\mathbb{Z}^{n}.$ Weinberger \cite{we} shows that when $r<n,$
any group action of $\mathrm{SL}_{n}(\mathbb{Z})$ on the lower dimensional
torus $T^{r}$ by diffeomorphisms is trivial, i.e., Conjecture \ref{conj}
holds for the group $\mathrm{SL}_{n}(\mathbb{Z})$ itself and $M=T^{r}$. As
an application of Theorem \ref{main2}, we obtain a topological version of
Weinberger's result, as a special case of the following (note that
Weinberger \cite{we} also outlines a proof of the topological version).

\begin{corollary}
\label{torus} We have the following.

\begin{enumerate}
\item[(i)] When $r<n-1,$ any group action of $\mathrm{SL}_{n}(\mathbb{Z})$
on the $r$-dimensional torus $T^{r}$ by homeomorphisms is trivial$.$

\item[(ii)] Let $M=T^{r_{1}}\times S^{r_{2}}$ be the product of a torus and
a sphere of dimension $r=r_{1}+r_{2},$ or $M=T^{r_{0}}\times S^{r_{1}}\times
S^{r_{2}}$ be the product of a torus and a sphere of dimension $%
r=r_{0}+r_{1}+r_{2}.$ When $r<n-1,$ any group action of $\mathrm{SL}_{n}(%
\mathbb{Z})$ on $M$ by diffeomorphisms or PL homeomorphisms is trivial$.$

\item[(iii)] Let $M=S^{2}\times S^{2},$ or $M=T^{r_{1}}\times S^{r_{2}}$ be
the product of a torus and a sphere of dimension $r=r_{1}+r_{2}\leq 4,$ (eg. 
$M=T^{2}\times S^{2},S^{1}\times S^{3}).$ When $r<n-1,$ any group action of $%
\mathrm{SL}_{n}(\mathbb{Z})$ on $M$ by homeomorphisms is trivial$.$
\end{enumerate}
\end{corollary}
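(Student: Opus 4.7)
The plan is to apply Theorem \ref{main2} in each case, so that the given action on the compact manifold $M$ lifts to an action on its universal cover, where triviality has already been established via Theorem \ref{main}.

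For part (ii), the universal cover of $T^{r_1}\times S^{r_2}$ is $\mathbb{R}^{r_1}\times S^{r_2}$, with deck transformation group $\mathbb{Z}^{r_1}$ of rank $r_1\leq r<n-1$; the mod-$2$ Betti numbers sum to at most $2$. Theorem \ref{main}(i),(ii) therefore forces every diffeomorphism or PL action of $\mathrm{SL}_n(\mathbb{Z})$ on this cover to be trivial when $r<n-1$, and Theorem \ref{main2} pushes the conclusion back down to $M$. The three-factor case $M=T^{r_0}\times S^{r_1}\times S^{r_2}$ runs identically: the universal cover $\mathbb{R}^{r_0}\times S^{r_1}\times S^{r_2}$ has mod-$2$ Betti sum at most $4$, the deck group is $\mathbb{Z}^{r_0}$ of rank $\leq r<n-1$, and the same two-step reduction applies.

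Part (iii) follows the same scheme but uses Theorem \ref{main}(iii) on the cover. For $M=S^2\times S^2$ the manifold is simply connected, and Theorem \ref{main}(iii) applies directly with Betti sum $4$ and $r=4$. For $M=T^{r_1}\times S^{r_2}$ with $r=r_1+r_2\leq 4$, the universal cover has Betti sum at most $2$ and total dimension at most $4$, so Theorem \ref{main}(iii) kills the lifted homeomorphism action and Theorem \ref{main2} transfers the triviality to $M$.

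The most delicate piece is part (i). The universal cover of $T^r$ is $\mathbb{R}^r$, which falls inside the scope of Theorem \ref{main}(iii) only when $r\leq 4$; for such $r$ one invokes Theorem \ref{main2} verbatim. For $5\leq r<n-1$ (so $n\geq 7$) the main obstacle is that Theorem \ref{main}(iii) no longer supplies the needed triviality on $\mathbb{R}^r$ in the topological category. I expect to circumvent this by first observing that $\mathrm{SL}_n(\mathbb{Z})$ admits no nontrivial integer representations of rank less than $n$, so the induced action on $H_1(T^r;\mathbb{Z})=\mathbb{Z}^r$ is trivial; each $g\in\mathrm{SL}_n(\mathbb{Z})$ then lifts to a self-homeomorphism of $\mathbb{R}^r$ commuting with the $\mathbb{Z}^r$-translations, and one runs the Smith-theoretic induction on $r$ through involutions exactly as in the proof of Theorem \ref{main}, working in the $\mathbb{Z}/2$-homology-manifold category (as in Theorem \ref{main3}) so that fixed sets of involutions remain tractable. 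Controlling these topological fixed sets in dimensions $\geq 5$ is the principal technical point.
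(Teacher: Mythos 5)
Your treatment of parts (ii) and (iii) is correct and is exactly the paper's route: pass to the universal cover $\mathbb{R}^{r_1}\times S^{r_2}$ (resp. $\mathbb{R}^{r_0}\times S^{r_1}\times S^{r_2}$, resp. handle $S^2\times S^2$ directly), note the mod-$2$ Betti sum is at most $4$ and the deck group $\mathbb{Z}^{k}$ has rank $k\le r<n-1$, kill the lifted action by Theorem \ref{main}, and descend by Theorem \ref{main2}.

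Part (i), however, has a genuine gap for $5\le r<n-1$, and you have flagged it yourself without closing it. Your plan to ``run the Smith-theoretic induction on $r$ through involutions exactly as in the proof of Theorem \ref{main}'' on $\mathbb{R}^r$ does not go through as stated: the induction in Theorem \ref{main} needs the fixed set of an orientation-preserving involution to be a manifold of even codimension, which is guaranteed only in the smooth/PL categories or, for topological actions, in dimension at most $4$ (this is precisely why Theorem \ref{main}(iii) carries the hypothesis $r\le 4$). For a topological action on $\mathbb{R}^r$ with $r\ge 5$ you offer no substitute for this control, so the ``principal technical point'' you name is exactly the missing proof. The paper closes it not by redoing Smith theory but by citing Bridson--Vogtmann \cite{bv}: any action of $\mathrm{SL}_n(\mathbb{Z})$ by homeomorphisms on $\mathbb{R}^r$ (indeed on a mod-$2$ acyclic homology manifold) is trivial whenever $r<n$; their argument exploits the acyclicity of the ambient space (fixed sets of elementary abelian $2$-groups are again mod-$2$ acyclic, hence nonempty, with no need for codimension-parity or manifold structure), which is why it works in the topological category in all dimensions. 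With that input, (i) follows from Theorem \ref{main2} exactly as in your (ii) and (iii); also note that your intermediate remark that each $g$ lifts to a homeomorphism of $\mathbb{R}^r$ ``commuting with the $\mathbb{Z}^r$-translations'' is not automatic --- the splitting of the extension $1\to\mathbb{Z}^r\to G'\to Q\to 1$ is what Theorem \ref{main2} provides via the vanishing of $H^2$ (Lemma \ref{cs} and the congruence subgroup property), so you should route that step through Theorem \ref{main2} rather than assert it directly.
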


\bigskip Corollary \ref{torus} shows that Conjecture \ref{conj} holds for
the group $\mathrm{SL}_{n}(\mathbb{Z})$ itself and $M=T^{r_{1}}\times
S^{r_{2}},$ the product of a torus, or $M=T^{r_{0}}\times S^{r_{1}}\times
S^{r_{2}},$ the product of a torus and two spheres.

The article is organized as follows. In Section 2, some basic facts about
homology manifolds over sheaves are introduced. In Section 3 and Section 4,
Theorem \ref{main} and Theorem \ref{main3} are proved. In the last section,
we prove Theorem \ref{main2} and Corollary \ref{torus}.

\section{Homology manifolds}

In this section, we introduce some basic notions on generalized manifolds,
following Bredon's book \cite{b}. Let $L=\mathbb{Z}$ or $\mathbb{Z}/p.$ All
homology groups in this section are Borel-Moore homology with compact
supports and coefficients in a sheaf $\mathcal{A}$ of modules over a
principle ideal domain $L$. The homology groups of $X$ are denoted by $%
H_{\ast }^{c}(X;\mathcal{A})$ and the Alexander-Spanier cohomology groups
(with coefficients in $L$ and compact supports) are denoted by $H_{c}^{\ast
}(X;L).$ We define $\dim _{L}X=\min \{n\mid H_{c}^{n+1}(U;L)=0$ for all open 
$U\subset X\}.$ If $L=\mathbb{Z}/p,$ we write $\dim _{p}X$ for $\dim _{L}X.$
For integer $k\geq 0,$ let $\mathcal{O}_{k}$ denote the sheaf associated to
the pre-sheaf $U\longmapsto H_{k}^{c}(X,X\backslash U;L).$

\begin{definition}
An $n$-dimensional homology manifold over $L$ (denoted $n$-hm$_{L}$) is a
locally compact Hausdorff space $X$ with $\dim _{L}X<+\infty $, and $%
\mathcal{O}_{k}(X;L)=0$ for $k\neq n$ and $\mathcal{O}_{n}(X;L)$ is locally
constant with stalks isomorphic to $L$. The sheaf $\mathcal{O}_{n}$ is
called the orientation sheaf.
\end{definition}

There is a similar notion of cohomology manifold over $L$, denoted $n$-cm$%
_{L}$ (cf. \cite{b}, p.373). In this article, we assume that the homology
manifolds are second countable. The following result is generally called the
Local Smith Theorem (cf. \cite{b} Theorem 20.1, Prop 20.2, pp. 409-410).

\begin{lemma}
\label{sm}Let $p$ be a prime and $L=\mathbb{Z}/p$. The fixed point set of
any action of $\mathbb{Z}/p$ on an $n$-hm$_{L}$ is the disjoint union of
(open and closed) components each of which is an $r$-hm$_{L}$ with $r\leq n$%
. If $p$ is odd then each component of the fixed point set has even
codimension.
\end{lemma}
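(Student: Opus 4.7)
The plan is to prove this via Smith theory, working sheaf-theoretically so as to accommodate arbitrary homology manifolds rather than just CW complexes. Let $G=\mathbb{Z}/p=\langle T\rangle$ act on $X$, and set $\sigma=1+T+T^{2}+\cdots+T^{p-1}$ and $\tau=T-1$ in the group ring $L[G]$. The crucial algebraic identities $\sigma\tau=\tau\sigma=0$ and $\tau^{p-1}=\pm\sigma$ let us split cochain (more precisely, sheaf) complexes into a pair of ``Smith'' subcomplexes $\sigma C^{*}$ and $\tau C^{*}$ related by long exact sequences. First, I would push the $G$-equivariant sheaf structure on $L_{X}$ forward along the orbit map $\pi\colon X\to X/G$ to obtain a sheaf of $L[G]$-modules on the orbit space, and then form the corresponding Smith sheaves on the pair $(X/G,F)$, where $F=X^{G}$ is the fixed set. (This is where the sheaf-theoretic cohomological dimension condition $\dim_{L}X<\infty$ is used, to guarantee convergence of the relevant spectral sequences and vanishing of high-degree Ext terms.)

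Next I would derive the standard Smith exact sequences of the form
\[
\cdots\to H_{c}^{k}(F;L)\oplus H_{c}^{k}(X/G,F;L)\to H_{c}^{k}(X;L)\to H_{c}^{k-1}(X/G,F;L)\to\cdots
\]
(plus the dual sequence using $\sigma$ in place of $\tau$), localized to small open neighbourhoods of a point $x\in F$. For an open neighbourhood $U$ of $x$ in $X$, the local cohomology is concentrated in degree $n$ with stalk $L$, because $X$ is an $n$-hm$_{L}$. Applying the Smith sequences to cofinal systems of $G$-invariant open neighbourhoods of $x$ in $X$ and passing to stalks gives the local cohomology of $F$ at $x$; the vanishing of $\mathcal{O}_{k}(X)$ for $k\neq n$ forces $\mathcal{O}_{k}(F)=0$ for $k\neq r$ for some $r\leq n$ and $\mathcal{O}_{r}(F)$ locally isomorphic to $L$. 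Since the dimension function $r$ may jump from one fixed component to another, $F$ decomposes into open-and-closed components, each itself an $r$-hm$_{L}$ with $r\leq n$.

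For the parity statement when $p$ is odd, I would localize at a fixed point $x$ of codimension $c=n-r$. The action of $G$ on the stalk $\mathcal{O}_{n,x}\cong L$ must be trivial because $\operatorname{Aut}(\mathbb{Z}/p)=(\mathbb{Z}/p)^{*}$ has order $p-1$ coprime to $p$, so the action on the top local cohomology is trivial. Using the identification of the normal local cohomology $H_{c}^{*}(U,U\setminus F;L)$ near $x$ as a representation-theoretic object for $G$ over $\mathbb{F}_{p}$, and the fact that every irreducible $\mathbb{F}_{p}[\mathbb{Z}/p]$-module other than the trivial one has dimension $\geq 2$ (since $p$ is odd and $\mathbb{F}_{p}$ contains no nontrivial $p$-th root of unity), combined with the orientation-preserving conclusion above, one forces $c$ to be even.

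The main obstacle is the sheaf-theoretic bookkeeping in step two: classical Smith theory is usually stated for finite-dimensional $G$-CW complexes, and transporting it to the Borel--Moore setting for general homology manifolds requires care in constructing the Smith sheaves and verifying that the relevant spectral sequences stalk correctly. Once that infrastructure is in place (as done in Bredon's book), the extraction of ``locally constant top stalk'' and ``even codimension'' proceeds by the representation-theoretic localization sketched above.
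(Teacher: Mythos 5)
The paper itself gives no proof of this lemma: it is quoted directly from Bredon's Sheaf Theory (the cited Theorem 20.1 and Proposition 20.2), and the first half of your sketch --- pushing the equivariant structure to the orbit space, forming the Smith sheaves for $\sigma=1+T+\cdots+T^{p-1}$ and $\tau=T-1$, and reading off the local cohomology of $F$ from the localized Smith sequences, with $\dim_L X<\infty$ guaranteeing convergence --- is exactly the standard route taken there. It is only an outline (in particular, getting that the local cohomology of $F$ is concentrated in a single degree $r$ with stalk exactly $L$, rather than merely bounded, takes the full strength of both the $\tau$- and $\sigma$-sequences together with local Poincar\'e duality), but the approach is the right one and can be completed.

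The genuine gap is in your parity argument for odd $p$. Over $\mathbb{F}_p$ the group algebra $\mathbb{F}_p[\mathbb{Z}/p]\cong\mathbb{F}_p[\tau]/(\tau^{p})$ is local: its \emph{only} irreducible module is the trivial one-dimensional module, so the fact you invoke (``every nontrivial irreducible has dimension $\geq 2$'') is empty, and the indecomposable modules have dimensions $1,2,\ldots,p$ --- every dimension occurs, odd and even --- so no representation-theoretic count of the normal local cohomology can force $n-r$ to be even. Likewise, triviality of the $G$-action on the top stalk $\mathcal{O}_{n,x}\cong L$ (orientation preservation) is automatic for $p$ odd, since a unit of $\mathbb{Z}/p$ of order dividing $p$ is $1$, but it carries no parity information; the equivalence between orientation behaviour and parity of the codimension is special to $p=2$ (that is precisely Lemma \ref{Bredon} of this paper). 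What actually yields even codimension is an Euler-characteristic argument in the spirit of Floyd (Lemma \ref{floy}): for a small invariant neighbourhood $U$ of $x\in F$, additivity of the compactly supported Euler characteristic gives $\chi_c(U)=\chi_c(U\cap F)+\chi_c(U\setminus F)$; the action on $U\setminus F$ is free, so $\chi_c(U\setminus F)\equiv 0 \pmod p$, while the homology-manifold structure localizes $\chi_c(U)$ and $\chi_c(U\cap F)$ to $(-1)^{n}$ and $(-1)^{r}$; hence $(-1)^{n}\equiv(-1)^{r}\pmod p$, and since $p>2$ this forces $n-r$ even. Equivalently, the point is that a free $\mathbb{Z}/p$-action ($p$ odd) on an even-dimensional mod $p$ cohomology sphere (the local ``link'') is impossible for the same Euler-characteristic reason. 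Without an argument of this type, your sketch does not establish the second sentence of the lemma.
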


In order to prove Theorem \ref{main}, we need several lemmas. The $i^{th}$
modulo $p$ Betti number $b_{i}(X;\mathbb{Z}/p)$ of $X$ is defined as the
dimension of vector space $H_{c}^{i}(X;\mathbb{Z}/p).$ If the prime $p$ is
clear from the context, we simply write $b_{i}(X)$ instead of $b_{i}(X;%
\mathbb{Z}/p)$. If $\dim _{p}X=n,$ we say that $X$ satisfies the modulo $p$
Poincar\'{e} duality if $b_{i}(X;\mathbb{Z}/p)=b_{n-i}(X;\mathbb{Z}/p)$ for
all $i\geq 0.$ Note that homology manifolds satisfy Poincar\'{e} duality
between Borel-Moore homology and sheaf cohomology (\cite{b}, Theorem 9.2,
p.329), i.e. if $X$ is an $n$-hm$_{L}$ then%
\begin{equation*}
H_{c}^{n-k}(X;\mathcal{O}_{n})\cong H_{k}^{c}(X;L).
\end{equation*}

The following lemma says that an elementary $p$-group acting freely on a
manifold, whose Betti numbers satisfy $\sum_{i=0}^{n}b_{i}(X)\leq 4,$ is of
rank at most $2$ (cf. \cite{mann}, Lemma 3.1).

\begin{lemma}
\label{mann}Suppose that $X$ is a space with $\dim _{p}X=n,$ $X$ satisfies
modulo $p$ Poincar\'{e} duality, and the sum of Betti numbers $%
\sum_{i=0}^{n}b_{i}(X)\leq 4.$ If $G$ is an elementary $p$-group of rank $k$
acting freely on $X,$ then $k\leq 2.$
\end{lemma}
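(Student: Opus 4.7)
I would combine a clean divisibility argument that covers the case $\chi(X)\neq 0$ with a Borel--construction spectral sequence argument that handles the remaining case where the Euler characteristic vanishes.

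To begin, since $G=(\mathbb{Z}/p)^{k}$ acts freely on the mod $p$ Poincar\'e duality space $X$, the orbit map $X\to X/G$ is a regular $p^{k}$-sheeted covering and the orbit space is again a mod $p$ homology manifold of the same dimension. The classical transfer relation for free actions then gives
\begin{equation*}
\chi (X)=p^{k}\chi (X/G),
\end{equation*}
so $p^{k}\mid \chi (X)$. On the other hand, Poincar\'e duality together with the hypothesis $\sum_{i=0}^{n}b_{i}(X;\mathbb{Z}/p)\leq 4$ yields $|\chi (X)|\leq 4$. Whenever $\chi (X)\neq 0$ this already forces $p^{k}\leq 4$, hence $k\leq 2$ for every prime $p$ (and in fact $k\leq 1$ when $p\geq 3$).

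For the remaining case $\chi (X)=0$---which is unavoidable when $n$ is odd, since mod $p$ Poincar\'e duality then forces $\chi (X)=0$---I would pass to the Borel fibration
\begin{equation*}
X\hookrightarrow X_{G}:=EG\times _{G}X\twoheadrightarrow BG.
\end{equation*}
Freeness of the action gives $X_{G}\simeq X/G$, so $H_{c}^{\ast }(X_{G};\mathbb{Z}/p)$ is finite dimensional and vanishes above degree $n$. The Leray--Serre spectral sequence
\begin{equation*}
E_{2}^{s,t}=H^{s}\bigl(BG;\mathcal{H}^{t}(X;\mathbb{Z}/p)\bigr)\Longrightarrow H_{c}^{s+t}(X_{G};\mathbb{Z}/p)
\end{equation*}
has an $E_{2}$-page which, up to twisting by a local system, is a finitely generated module over $R:=H^{\ast }(BG;\mathbb{Z}/p)$ with total fibrewise dimension $\sum_{i}b_{i}(X;\mathbb{Z}/p)\leq 4$. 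Since $R$ has Krull dimension $k$ while the abutment is bounded, a Carlsson--Browder type Poincar\'e-series estimate forces $2^{k}\leq \sum_{i}b_{i}(X;\mathbb{Z}/p)\leq 4$, and again $k\leq 2$.

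The principal obstacle is the vanishing Euler-characteristic case: the elementary divisibility input alone is insufficient, and one must work with the full module-theoretic input from the Borel fibration. Two technical points need care: (i) handling the potentially nontrivial action of $G$ on $H^{\ast }(X;\mathbb{Z}/p)$ in the Serre $E_{2}$-page, and (ii) verifying that the Poincar\'e-series / Krull-dimension estimate applies in the generality of mod $p$ homology manifolds rather than just finite CW complexes. Once these are in place, both cases combine to give $k\leq 2$.
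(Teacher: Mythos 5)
Your first step (the transfer/divisibility argument giving $p^{k}\mid\chi(X)$, hence $k\leq 2$ whenever $\chi(X)\neq 0$) is fine, but the case you yourself identify as the heart of the matter, $\chi(X)=0$, is not actually proved. The inequality you invoke there, $2^{k}\leq\sum_{i}\dim H^{i}(X;\mathbb{Z}/p)$ for free $(\mathbb{Z}/p)^{k}$-actions, is not an available theorem: it is precisely Carlsson's free rank conjecture (the mod $p$ analogue of the toral rank conjecture), which is open in general, and the sketch you give --- Krull dimension $k$ of $H^{\ast}(BG;\mathbb{Z}/p)$ versus boundedness of the abutment of the Borel spectral sequence --- does not yield the $2^{k}$ bound; if it did, it would settle that conjecture. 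The Krull-dimension/localization argument only recovers the (true but useless here) fact that a free action forces $H^{\ast}_{G}(X)$ to be finite dimensional; extracting a lower bound on $\sum b_{i}(X)$ from this is exactly the hard open problem. So as written the $\chi(X)=0$ case, which includes the essential examples (mod $p$ cohomology spheres of odd dimension, products of an odd and an even sphere, etc.), has a genuine gap.

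The fix is to replace the appeal to a general ``Carlsson--Browder estimate'' by the specific classical results that cover the few possible Betti-number patterns, which is what the paper does: it splits according to $\sum b_{i}(X)\in\{1,2,3,4\}$, using Smith theory for the acyclic case ($k=0$), Smith's theorem that a free elementary abelian action on a mod $p$ cohomology sphere has rank $\leq 1$, and Heller's theorem (rank $\leq 2$ for spaces with the mod $p$ Betti numbers of a product of two spheres), with an adaptation of Heller's computation handling $\sum b_{i}=3$. Your Euler characteristic observation is consistent with, but strictly weaker than, this; to complete your argument you would need to prove or cite rank bounds for exactly the $\chi=0$ profiles above, i.e.\ you would be led back to Smith and Heller. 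A secondary, more minor point: iterating the relation $\chi(X)=p\,\chi(X/\langle g\rangle)$ in the generality of finite-dimensional (noncompact) spaces with compactly supported cohomology needs a word of justification (e.g.\ via the Floyd-type congruence $\chi(X)\equiv\chi(X^{g})\ \mathrm{mod}\ p$ applied to suitable subgroups), but this is repairable; the missing $\chi=0$ case is not.
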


\begin{proof}
When the space $X$ is assumed to be compact, it is already proved in \cite%
{mann}. Actually, the statement holds for any general space, as follows. If $%
\sum_{i=0}^{n}b_{i}(X)=1,$ the space $X$ is acyclic; hence, $k=0$ (cf. \cite%
{b}, Corollary 19.8, p.144). If $\sum_{i=0}^{n}b_{i}(X)=2,$ the space $X$ is
a modulo $p$ cohomology $n$-sphere. A classical result of Smith \cite{sm}
says that $k\leq 1.$ If $\sum_{i=0}^{n}b_{i}(X)=4,$ then $X$ has the modulo $%
p$ Betti numbers of a product of two spheres and $k\leq 2$ by a result of
Heller \cite{he} (Theorem 2). If $\sum_{i=0}^{n}b_{i}(X)=3,$ similar
techniques in \cite{he} prove that $k\leq 2.$ For example, substitution $%
k=r=3$ into (3.3) of \cite{he} leads to a contradiction.
\end{proof}

\bigskip

The following result relates the Betti numbers, Euler characteristics of the
fixed point set and those of the whole space (cf. Floyd \cite{flo}).

\begin{lemma}
\label{floy}Let $p$ be a prime number and $G=\mathbb{Z}/p$ act on a
cohomological manifold $X$ with the fixed point set $F.$ Then 
\begin{equation*}
\chi (F)\equiv \chi (G)\func{mod}p,
\end{equation*}%
and 
\begin{equation*}
\sum b_{i}(F)\leq \sum b_{i}(X).
\end{equation*}
\end{lemma}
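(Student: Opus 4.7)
The plan is to establish the two assertions separately, using Smith-theoretic techniques adapted to the cohomology manifold setting developed in Section 2.

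For the Betti number inequality, I would invoke the classical Smith inequality. For a $\mathbb{Z}/p$-action on a cohomology manifold $X$ with fixed set $F$, the long exact sequences coupling Smith's special cohomology with ordinary mod-$p$ compactly supported cohomology, as developed in Bredon \cite{b} (Chapter III), yield $\sum_i b_i(F;\mathbb{Z}/p) \leq \sum_i b_i(X;\mathbb{Z}/p)$ by a standard rank-counting argument. I would simply cite this in the form valid in the cohomology manifold category, with no further work required.

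For the Euler characteristic congruence, interpreting the right-hand side as $\chi(X)$, I would decompose $X = F \sqcup U$ with $U := X \setminus F$ open and work with compactly supported Euler characteristics. Additivity along a closed/open decomposition gives $\chi(X) = \chi(F) + \chi(U)$. Since $G = \mathbb{Z}/p$ acts freely on $U$ by definition of the fixed set, the quotient map $U \to U/G$ is an unramified $p$-fold covering, and multiplicativity of $\chi$ under finite covers yields $\chi(U) = p \cdot \chi(U/G) \equiv 0 \bmod p$. Combining gives $\chi(F) \equiv \chi(X) \bmod p$.

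The main technical point I expect to check is that all three Euler characteristics are well defined as finite alternating sums in the non-compact cohomology manifold setting. In the applications of this lemma in the paper, the hypothesis $\sum_i b_i(X) \leq 4$ guarantees that $H_c^*(X;\mathbb{Z}/p)$ is finite-dimensional; the Smith inequality then propagates this to $F$, and the long exact sequence of the pair $(X,F)$ propagates it to $U$. Multiplicativity of $\chi$ on the free quotient $U \to U/G$ follows from the transfer argument for finite covers, under which pullback composed with pushforward is multiplication by the covering degree on mod-$p$ compactly supported cohomology. I expect this bookkeeping to be the only real obstacle, and to be routine once the finite-dimensionality is in hand.
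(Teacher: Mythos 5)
The paper does not actually prove this lemma: it is quoted verbatim from Floyd \cite{flo} (and the ``$\chi(G)$'' in the statement is a typo for $\chi(X)$, which you correctly read into it). So the comparison is between your sketch and the classical Smith--Floyd argument. Your first half is fine at the level you pitch it: the inequality $\sum b_i(F)\leq\sum b_i(X)$ is exactly the Smith inequality, provable from the Smith special sequences with compact supports, and citing Bredon for it is legitimate (it lives in the Smith-theory sections of \cite{b} and \cite{bre}, not ``Chapter III'' of the sheaf theory book, but that is cosmetic).

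The genuine gap is in your justification of the key step $\chi_c(U)=p\,\chi_c(U/G)$. The transfer argument you invoke gives nothing here: with $\mathbb{Z}/p$ coefficients the composite of pullback and transfer for a $p$-fold cover is multiplication by $p$, which is zero, so it carries no rank information; and even over a field of characteristic prime to the degree, transfer only splits $H_c^*(U/G)$ off as a summand of $H_c^*(U)$, which yields an inequality of Betti numbers, never multiplicativity of Euler characteristics. You also never establish that $H_c^*(U/G;\mathbb{Z}/p)$ is finite dimensional, which is needed for $\chi_c(U/G)$ to be defined at all, and transfer cannot deliver that mod $p$ either. The standard repair is the one Floyd's cited paper uses: run the Smith sequences (equivalently, the Cartan--Leray spectral sequence $H^s(G;H_c^t(U;\mathbb{Z}/p))\Rightarrow H_c^{s+t}(U/G;\mathbb{Z}/p)$ on the free part, using finite-dimensionality of $X$), which give the finiteness statements and the identity $\chi(X)+(p-1)\chi(F)=p\,\chi(X/G)$, from which both assertions of the lemma follow at once. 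Alternatively one can salvage your covering-space route via $\chi_c(U)=\chi_c\bigl(U/G;\pi_*(\mathbb{Z}/p)\bigr)$ with $\pi_*(\mathbb{Z}/p)$ a locally constant sheaf of rank $p$, but proving that this equals $p\,\chi_c(U/G)$ in the generality of cohomology manifolds again comes down to the same spectral-sequence bookkeeping, so the step as you justified it would fail while the surrounding decomposition $\chi(X)=\chi(F)+\chi_c(U)$ is sound.
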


There is a relation between dimensions of fixed point set and the whole
space as follows (cf. Borel \cite{Bo}, Theorem 4.3, p182).

\begin{lemma}
\label{borel}Let $G$ be an elementary $p$-group operating on a first
countable cohomology $n$-manifold $X$ mod $p.$ Let $x\in X$ be a fixed point
of $G$ on $X$ and let $n(H)$ be the cohomology dimension mod $p$ of the
component of $x$ in the fixed point set of a subgroup $H$ of $G.$ If $%
r=n(G), $ we have 
\begin{equation*}
n-r=\sum_{H}(n(H)-r)
\end{equation*}%
where $H\ $runs through the subgroups of $G$ of index $p.$
\end{lemma}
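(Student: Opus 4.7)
The plan is to reduce Borel's formula to a local representation-theoretic identity at $x$, and then verify that identity in characteristic $p$ via Smith-type localization of equivariant cohomology. First, since both $n$ and each $n(H)$ are cohomology dimensions of (components of) fixed sets near $x$, the identity is purely local at $x$. I would therefore pass to a sufficiently small $G$-invariant neighborhood of $x$ and replace ``the component of $x$ in $F^H$'' by $F^H$ itself, so that each dimension is encoded by a single local cohomology group at $x$.

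Next, I would write $G=(\mathbb{F}_p)^k$ and observe that index-$p$ subgroups of $G$ are in bijection with nontrivial characters $\chi:G\to\mathbb{F}_p$ modulo $\mathbb{F}_p^{\times}$-scaling, via $H\leftrightarrow\ker\chi$. In the semisimple (characteristic-zero) model, a $G$-representation $V$ decomposes as $V=V^G\oplus\bigoplus_{\chi\neq 1}V_\chi$, and hence $V^H=V^G\oplus\bigoplus_{\chi|_H=1,\ \chi\neq 1}V_\chi$. For each nontrivial $\chi$ there is a unique hyperplane $H=\ker\chi$ with $\chi|_H=1$, so summing over $H$ yields
\begin{equation*}
\sum_{H}\bigl(\dim V^H-\dim V^G\bigr)=\sum_{\chi\neq 1}\dim V_\chi=\dim V-\dim V^G,
\end{equation*}
which is exactly the desired formula. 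This ``one hyperplane per nontrivial character'' identity is the combinatorial heart of the lemma.

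Over $\mathbb{F}_p$ the local ``tangent'' representation at $x$ is an $\mathbb{F}_pG$-module that is almost never semisimple, so the direct isotypic decomposition is unavailable. I would replace it by the Leray spectral sequence of the Borel fibration $EG\times_G X\to BG$, computing $H^{\ast}_G(X;\mathbb{F}_p)$ and its localizations at the primes of $H^{\ast}(BG;\mathbb{F}_p)$ associated to the various hyperplanes $H$. Borel's localization theorem identifies the $H$-fixed-set contribution with the contribution coming from the corresponding character on the level of cohomology dimensions. Combined with the combinatorial identity above, this promotes the character-theoretic counting to the cohomology-dimension equality $n-r=\sum_H(n(H)-r)$.

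The main obstacle is the bookkeeping in characteristic $p$: because $\mathbb{F}_pG$ is not semisimple, one must check that the localized equivariant spectral sequence produces exactly the codimensions $n(H)-r$ with no spurious lower-order corrections. This is where the hypothesis that $X$ is a cohomology $n$-manifold mod $p$ is essential, since Smith theory (Lemma \ref{sm}) guarantees that each $F^H$ is itself a cohomology manifold of the right dimension, and hence that the localized equivariant cohomology is controlled exactly. Granted that control, the formula follows from the character-counting argument of the second paragraph.
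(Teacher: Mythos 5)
The paper itself gives no proof of this lemma: it is quoted directly from Borel's \emph{Seminar on Transformation Groups} (the citation \cite{Bo}, Theorem 4.3, p.~182), so your proposal has to stand on its own, and as written it does not. The character-counting identity $\sum_{H}\bigl(\dim V^{H}-\dim V^{G}\bigr)=\dim V-\dim V^{G}$ is correct and is indeed the heuristic behind the formula, but it proves the lemma only when the action admits a local linear model at $x$ (smooth or locally smooth actions, via a slice/tangent representation). For a merely topological action on a mod $p$ cohomology manifold there is no tangent representation, and the whole content of Borel's theorem is exactly the passage from this linear picture to the cohomology dimensions $n(H)$ of the fixed sets. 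That passage is the step you call ``bookkeeping'' and then assume (``Granted that control, the formula follows''), so the decisive part of the argument is missing rather than proved.

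Concretely: the localization theorem you invoke gives an isomorphism of localized equivariant cohomologies, $S^{-1}H^{*}_{G}(X)\cong S^{-1}H^{*}_{G}(X^{G})$, or its refinement at prime ideals of $H^{*}(BG;\mathbb{F}_{p})$; it does not by itself produce the local cohomology dimensions of the fixed sets $F^{H}$ of the index-$p$ subgroups, and Lemma \ref{sm} only says each $F^{H}$ is a cohomology manifold of some dimension --- those dimensions are precisely the unknowns in the formula, not something Smith theory hands you ``exactly controlled.'' Turning localized equivariant cohomology into the equality $n-r=\sum_{H}(n(H)-r)$ requires genuine additional structure theory: local Poincar\'e duality around the component of $x$ in $X^{G}$, the behaviour of $H^{*}_{G}$ at the primes corresponding to the hyperplanes $H$, and (for odd $p$) handling the exterior part of $H^{*}(BG;\mathbb{F}_{p})$. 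This is carried out in the literature either by Borel's original argument (an induction on the rank of $G$ whose essential case is rank two, done with local Smith theory), or by Hsiang's ideal-theoretic localization machinery for $p$-torus actions on cohomology manifolds. Your sketch outlines the second route but omits its substance, so there is a genuine gap at the central step.
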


\bigskip

The following lemma is a key step in the inductive proof of Theorem \ref%
{main} (cf. Bredon \cite{bre2}, Theorem 7.1).

\begin{lemma}
\label{Bredon}Let $G$ be a group of order $2$ operating effectively on an $n$%
-cm over $\mathbb{Z}$, with nonempty fixed points. Let $F_{0}$ be a
connected component of the fixed point set of $G$, and $r=\dim _{2}F_{0}$.
Then $n-r$ is even (respectively odd) if and only if $G$ preserves
(respectively reverses) the local orientation around some point of $F_{0}.$
\end{lemma}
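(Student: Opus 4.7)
The plan is to show that $\epsilon = (-1)^{n-r}$, where $\epsilon \in \{\pm 1\}$ is the sign by which the nontrivial element $\tau \in G$ acts on the local orientation stalk $\mathcal{O}_{n}|_{x} \cong \mathbb{Z}$ at a chosen point $x \in F_{0}$. The statement is modeled on the linear case: an involution of $\mathbb{R}^{n}$ fixing an $r$-dimensional subspace has determinant $(-1)^{n-r}$; the task is to reproduce this parity computation in the sheaf-cohomological setting of generalized manifolds.

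First I would pick a sufficiently small $G$-invariant open neighborhood $U$ of $x$, using Bredon's local structure theory for $n$-cm$_{\mathbb{Z}}$ spaces: such $U$ can be arranged so that $(U, U\setminus\{x\})$ is a \emph{disk--sphere pair} at $x$, meaning $U$ is $\mathbb{Z}$-acyclic while $L := U\setminus\{x\}$ has the $\mathbb{Z}$-cohomology of $S^{n-1}$, with $\tau$ acting on its top class by $\epsilon$. Applying the Local Smith Theorem (Lemma \ref{sm}) with $\mathbb{Z}/2$-coefficients, $F := F_{0}\cap U$ is an open subset of the $r$-hm$_{\mathbb{Z}/2}$ component $F_{0}$; shrinking once more, I may assume $F$ is itself $\mathbb{Z}/2$-acyclic, so that $F\setminus\{x\}$ is a $\mathbb{Z}/2$-cohomology $(r-1)$-sphere (empty when $r=0$), and this set is precisely the fixed set of $\tau$ on $L$.

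The concluding step is a Lefschetz-number comparison on $L$. From the cohomological description, the rational Lefschetz number of $\tau$ on $L$ equals $1 + (-1)^{n-1}\epsilon$. By Bredon's sheaf-theoretic Smith--Lefschetz fixed point theorem for involutions on generalized cohomology spheres, this rational trace coincides with the $\mathbb{Z}/2$-Euler characteristic of the fixed set, namely $1 + (-1)^{r-1}$ when $r\geq 1$ and $0$ when $r=0$. Equating the two expressions gives $\epsilon = (-1)^{n-r}$ in every case, which is the desired dichotomy.

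The main obstacle is the final step: invoking a trace formula that reconciles the rational Lefschetz trace (which is what records the sign $\epsilon$) with a mod $2$ Euler characteristic (which is what controls the dimension of the fixed set), in a setting where $X$ is only an $n$-cm$_{\mathbb{Z}}$ and the fixed component is merely an $r$-hm$_{\mathbb{Z}/2}$. This reconciliation is the technical core of Bredon's Theorem 7.1 and relies on his machinery of Smith exact sequences mixing $\mathbb{Z}$ and $\mathbb{Z}/2$ coefficients; the remaining ingredients are just Poincar\'e duality for $n$-cm$_{\mathbb{Z}}$ spaces and the Local Smith Theorem already recorded in the excerpt.
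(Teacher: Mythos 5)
The paper does not prove this lemma at all: it is quoted verbatim from Bredon \cite{bre2} (Theorem 7.1), so the benchmark is Bredon's own argument, which runs through his Smith-sequence and orientation-sheaf machinery rather than a link-plus-Lefschetz reduction. Measured against that, your sketch has two genuine gaps. First, the opening reduction is unjustified: in an $n$-cm over $\mathbb{Z}$ (and likewise for the $r$-hm$_{\mathbb{Z}/2}$ component $F_{0}$) there is no "local structure theory" providing arbitrarily small invariant neighborhoods $U$ of $x$ that are acyclic with $U\setminus\{x\}$ a cohomology $(n-1)$-sphere, nor neighborhoods in $F_{0}$ that are $\mathbb{Z}/2$-acyclic with spherical deleted part. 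The definition only controls the local cohomology $H^{\ast}(X,X\setminus\{x\})$ and gives cohomological local connectedness, i.e.\ restriction maps that are eventually zero \emph{in the limit}; individual neighborhoods need not be acyclic and their deleted parts need not be cohomology spheres. So the passage to an involution on a generalized $(n-1)$-sphere $L$, which carries the whole argument, is not available as stated.

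Second, the concluding step equates the \emph{rational} Lefschetz number $1+(-1)^{n-1}\epsilon$ on $L$ with the \emph{mod $2$} Euler characteristic $1+(-1)^{r-1}$ of the fixed set. No standard theorem gives this: transfer/Lefschetz arguments (where they apply at all — they need compactness or finitistic hypotheses that the open set $L$ need not satisfy) produce the rational Euler characteristic of the fixed set, and a mod $2$ cohomology $(r-1)$-sphere can have vanishing top rational Betti number, so its rational Euler characteristic need not be $1+(-1)^{r-1}$. Reconciling the rational trace (which sees $\epsilon$) with mod $2$ Smith-theoretic dimension data is exactly the content of Bredon's Theorem 7.1, and you acknowledge as much by calling it "the technical core" and deferring it to "Bredon's machinery." As a proof this is therefore circular: the sign computation you would be allowed to quote is essentially the lemma itself. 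To make the argument honest you would either have to reproduce Bredon's Smith-sequence analysis of the orientation sheaf along $F_{0}$, or else restrict to a setting (e.g.\ locally smooth actions on genuine manifolds) where the linear local model makes the parity statement elementary — which is far less than the lemma claims.
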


\section{Smooth and piecewise linear actions}

In order to prove Theorem \ref{main}, we need several technical lemmas.

\begin{lemma}
\label{sym}Let $S_{n}$ be the permutation group of $n$ letters and $k\geq 2$
be an integer. For any $n\leq 4$ and any group homomorphism $f:\mathrm{SL}%
_{k}(\mathbb{Z}/2)\ltimes (\mathbb{Z}/2)^{k}\rightarrow S_{4},$ where $%
\mathrm{SL}_{k}(\mathbb{Z}/2)$ acts on $(\mathbb{Z}/2)^{k}$ by matrix
multiplications, there is an element $\sigma \in \mathrm{SL}_{k}(\mathbb{Z}%
/2)\ltimes (\mathbb{Z}/2)^{k}$ of order two such that the fixed point set $%
\mathrm{Fix(}f(\sigma ))\neq \emptyset .$
\end{lemma}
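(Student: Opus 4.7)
The plan is to split the argument into the cases $k\geq 3$ and $k=2$. For $k\geq 3$, the group $\mathrm{SL}_k(\mathbb{Z}/2)=\mathrm{PSL}_k(\mathbb{F}_2)$ is simple of order at least $168>24=|S_4|$, so the restriction $f|_{\mathrm{SL}_k(\mathbb{Z}/2)}$ is forced to be trivial. Next, $f|_V$ maps $V:=(\mathbb{Z}/2)^k$ to an elementary abelian $2$-subgroup of $S_4$, and such subgroups of $S_4$ have rank at most $2$; hence the kernel of $f|_V$ is a non-zero $\mathrm{SL}_k(\mathbb{Z}/2)$-invariant subspace of $V$. Since the natural action of $\mathrm{SL}_k(\mathbb{Z}/2)$ on $V$ is irreducible, this kernel equals $V$, and $f|_V$ is trivial as well. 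As $G=\mathrm{SL}_k(\mathbb{Z}/2)\cdot V$, the whole homomorphism $f$ is trivial, and any non-zero $\sigma\in V$ is then an involution with $f(\sigma)=\mathrm{id}$ fixing all of $\{1,2,3,4\}$.

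For $k=2$, the exceptional isomorphism $G=\mathrm{SL}_2(\mathbb{Z}/2)\ltimes(\mathbb{Z}/2)^2\cong S_3\ltimes V_4\cong S_4$ reduces the task to analyzing homomorphisms $f\colon S_4\to S_4$. The normal subgroups of $S_4$ are $1$, $V_4$, $A_4$, and $S_4$, so I would treat the four cases of $\ker f$ in turn. If $A_4\subseteq \ker f$, any non-identity $\sigma\in V_4$ is an involution with $f(\sigma)=\mathrm{id}$, which fixes all four letters. If $\ker f=V_4$, then $f$ factors through an injection $S_4/V_4\cong S_3\hookrightarrow S_4$; the image has index $4$, and the faithful $S_4$-action on its cosets identifies this image with a point-stabilizer, so any transposition $\sigma$ in $S_4$ is sent to a non-trivial element of this $S_3$ which fixes the stabilized point. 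Finally, if $\ker f=1$ then $f$ is an automorphism of $S_4$, which is inner because $\mathrm{Out}(S_4)$ is trivial; hence $f$ preserves cycle types, and any transposition $\sigma$ has $f(\sigma)$ a transposition with two fixed points.

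The main obstacle I anticipate is the subcase $k=2$, $\ker f=V_4$, where one must rule out ``non-standard'' copies of $S_3$ sitting inside $S_4$; this is settled by the faithful coset action, which forces every order-$6$ subgroup of $S_4$ to be a point-stabilizer. The other subcases reduce at once to noting that $V_4\subseteq A_4$ (so its involutions are annihilated whenever $\ker f$ contains $A_4$) and that every automorphism of $S_4$ preserves the cycle type of a transposition. Combining the two main cases yields the lemma.
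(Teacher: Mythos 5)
Your argument is correct in substance, but it takes a genuinely different route from the paper's, most visibly at $k=2$. The paper never invokes the exceptional isomorphism $\mathrm{SL}_2(\mathbb{Z}/2)\ltimes(\mathbb{Z}/2)^2\cong S_4$: it works with two explicit involutions of $\mathrm{SL}_2(\mathbb{Z}/2)$, namely the permutation matrix $A$ of $(12)$ and the elementary matrix $B=e_{12}(1)$, observes that in the only problematic configuration (both $f(A,0)$ and $f(B,0)$ fixed-point-free involutions) the order-$3$ element $AB$ must be killed by $f$ because a $3$-cycle cannot lie in the Klein four-group of double transpositions, and then exhibits the concrete translation $\sigma=(I_2,(1,0))$, written as the product of the conjugate of $(I,(0,1))$ by $(AB,0)$ with $(I,(0,1))$, so that $f(\sigma)=\mathrm{id}$. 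Your classification of $f$ by its kernel, using the normal subgroup lattice of $S_4$ together with $\mathrm{Out}(S_4)=1$, is cleaner and more conceptual; the paper's computation is more elementary and treats the targets $S_2$, $S_3$, $S_4$ uniformly. For $k\geq 3$ you are, if anything, more careful than the paper: the paper only notes that $\mathrm{SL}_k(\mathbb{Z}/2)$ is perfect and concludes that the image of $f$ is trivial, which tacitly uses that the whole semidirect product is perfect — exactly what your irreducibility argument supplies.

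Two small points to tighten, neither of which affects the viability of your approach. First, as the lemma is actually applied (to the permutation action of a centralizer on a fixed-point set of at most four points), the target is really $S_n$ for some $n\leq 4$ and the fixed point must lie among those $n$ letters; your reduction to homomorphisms $S_4\to S_4$ literally covers only $n=4$. The cases $n\leq 3$ follow at once from the same kernel analysis (for $n\leq 3$ the map cannot be injective; if the kernel contains $A_4$, every translation is killed, and if the kernel is $V_4$, any involution maps to an element of order at most $2$ of an $S_3$ acting on at most three letters, which necessarily has a fixed point), but this should be said. Second, in the subcase $\ker f=V_4$, the faithful coset action shows only that the order-$6$ image is a point stabilizer of that coset action; to conclude that it fixes one of the original four letters you still need a word — for instance that the induced self-isomorphism of $S_4$ is inner since $\mathrm{Out}(S_4)=1$, or directly that an order-$6$ subgroup of $S_4$ normalizes its Sylow $3$-subgroup $\langle(abc)\rangle$ and hence equals the stabilizer of the remaining letter.
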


\begin{proof}
When $k\geq 3,$ the group $\mathrm{SL}_{k}(\mathbb{Z}/2)$ is perfect.
Therefore, $\func{Im}f$ consist only the identity element and the claim is
trivial. It is only need to prove the case when $k=2.$ For the group $%
\mathrm{SL}_{k}(\mathbb{Z}/2),$ let 
\begin{equation*}
A=%
\begin{pmatrix}
0 & 1 \\ 
1 & 0%
\end{pmatrix}%
\text{ and }B=%
\begin{pmatrix}
1 & 1 \\ 
0 & 1%
\end{pmatrix}%
.
\end{equation*}%
Note that $A^{2}=B^{2}=I.$ When $n=3,$ $f((A,(0,0))$ has order at most $2,$
for which we take $\sigma =(A,(0,0)).$ When $n=4,$ if $f((A,(0,0))$ (or $%
f(B,(0,0)))$ is not a product of disjoint transpositions, we take $\sigma
=(A,(0,0))$ (or $(B,(0,0))).$ We suppose that both $f((A,(0,0)))$ and $%
f((B,,(0,0))$ are products of two disjoint transpositions when $n=4$ or that
both $f((A,(0,0)))$ and $f((B,,(0,0))$ are transpositions when $n=2$. Then $%
f((AB,(0,0))=\mathrm{id}\in S_{n},$ since $AB$ is of order $3\ $while a $3$%
-cycle in $S_{4}$ is not a product of such transpositions. Take 
\begin{equation*}
\sigma =(I_{2},(1,0))=(AB,(0,0))(I,(0,1))(B^{-1}A^{-1},(0,0))(I_{2},(0,1)).
\end{equation*}%
It is direct that $f(\sigma )=$ $\mathrm{id}\in S_{n}$ for $n=2,4.$
\end{proof}

\bigskip\ 

For $1\leq i\neq j\leq n,$ let $e_{ij}(1)$ denote the matrix with $1$s along
the diagonal and $1$ in the $(i,j)$-th position and zeros elsewhere. Let $D<%
\mathrm{SL}_{n}(\mathbb{Z}/p)$ be the subgroup generated by $%
\{e_{1i}(1),i=2,\ldots ,n\}$.

\begin{lemma}
\label{finite conj}Let $n\geq 3.$ The subgroup $D$ is isomorphic to $(%
\mathbb{Z}/p)^{n-1}$ and any two nontrivial elements in $\mathrm{SL}_{n}(%
\mathbb{Z}/p)$ are conjugate.
\end{lemma}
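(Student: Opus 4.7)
The plan is to verify the two claims in turn: first that $D$ is an elementary abelian $p$-group of the stated rank, and then that conjugation by block-diagonal matrices of the form $\mathrm{diag}(1, M)$ acts transitively on the nontrivial elements of $D$.

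For the first claim, write each generator as $e_{1i}(1) = I + E_{1i}$, where $E_{1i}$ is the matrix unit with a single $1$ in position $(1,i)$. A direct check gives $E_{1i}E_{1j} = 0$ for all $i,j \geq 2$, so the generators pairwise commute and each has order $p$. Every element of $D$ then takes the unique block form
\begin{equation*}
x_v = \begin{pmatrix} 1 & v^{T} \\ 0 & I_{n-1} \end{pmatrix}, \qquad v = (a_2, \ldots, a_n)^{T} \in (\mathbb{Z}/p)^{n-1},
\end{equation*}
so that $|D| = p^{n-1}$ and $D \cong (\mathbb{Z}/p)^{n-1}$.

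For the second claim, which I read as saying that any two nontrivial elements of $D$ are conjugate in $\mathrm{SL}_{n}(\mathbb{Z}/p)$ (the literal statement as typeset cannot hold, since elements of different orders are never conjugate), I would take an arbitrary $M \in \mathrm{SL}_{n-1}(\mathbb{Z}/p)$ and set $A = \mathrm{diag}(1, M) \in \mathrm{SL}_{n}(\mathbb{Z}/p)$. A direct block multiplication yields
\begin{equation*}
A\, x_v\, A^{-1} = x_{M^{-T} v}.
\end{equation*}
Since $n \geq 3$ forces $n - 1 \geq 2$, the group $\mathrm{SL}_{n-1}(\mathbb{Z}/p)$ acts transitively on $(\mathbb{Z}/p)^{n-1} \setminus \{0\}$ (extend a nonzero vector to a basis, then rescale a non-pivot row to force determinant one), and as $M$ ranges over $\mathrm{SL}_{n-1}$, so does $M^{-T}$. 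Consequently $x_v$ and $x_{v'}$ are conjugate in $\mathrm{SL}_{n}(\mathbb{Z}/p)$ for every pair of nonzero vectors $v, v'$.

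The main technicality—and really the only one—is keeping the conjugating element inside $\mathrm{SL}_{n}$ rather than $\mathrm{GL}_{n}$. This is exactly what forces the hypothesis $n \geq 3$, since $\mathrm{SL}_{1}(\mathbb{Z}/p)$ is trivial and would not suffice; the hypothesis plays no other role.
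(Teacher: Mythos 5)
Your proof is correct, and you were right to read the second assertion as ``any two nontrivial elements of $D$ are conjugate in $\mathrm{SL}_{n}(\mathbb{Z}/p)$,'' which is how the lemma is actually used later in the paper. Your route differs from the paper's in execution, though the underlying mechanism is the same: both arguments conjugate elements of $D$ by matrices that fix the first coordinate direction, so that conjugation acts on the top-row vector $v$. The paper does this concretely, building the conjugator step by step out of a permutation matrix $P_{23}$ (to arrange $x_{2}\neq 0$) and elementary matrices $e_{2i}(x_{2}^{-1}x_{i})$, reducing an arbitrary nontrivial element of $D$ to $e_{12}(1)$; you instead observe in one block computation that conjugation by $\mathrm{diag}(1,M)$, $M\in\mathrm{SL}_{n-1}(\mathbb{Z}/p)$, sends $x_{v}$ to $x_{M^{-T}v}$, and then invoke transitivity of $\mathrm{SL}_{n-1}(\mathbb{Z}/p)$ on nonzero vectors (valid since $n-1\geq 2$). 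Your version is cleaner and more structural: it makes the role of the hypothesis $n\geq 3$ transparent (for $n=2$ the acting group $\mathrm{SL}_{1}$ is trivial and indeed $e_{12}(1)$, $e_{12}(\nu)$ need not be conjugate in $\mathrm{SL}_{2}(\mathbb{Z}/p)$), and it avoids the explicit bookkeeping in the paper's computation, at the cost of quoting (and briefly justifying) the standard transitivity fact rather than exhibiting the conjugating matrices explicitly, as the paper does.
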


\begin{proof}
It is obvious that $e_{1i}(1),i=2,\ldots ,n,$ commutes with each other and
thus $D\cong (\mathbb{Z}/p)^{n-1}$. Let $A=\prod_{i=2}^{n}e_{1i}(x_{i})\in D$
be any nontrivial element, where $(x_{2},x_{3},\ldots ,x_{n})\in (\mathbb{Z}%
/p)^{n-1}$ is a nonzero vector. After conjugacing by a permutation matrix,
we may assume $x_{2}\neq 0$ (for example, when $x_{3}\neq 0,$ the $(1,2)$-th
entry of $P_{23}AP_{23}^{-1}$ is nonzero. Here 
\begin{equation*}
P_{23}=%
\begin{pmatrix}
1 &  &  &  \\ 
&  & 1 &  \\ 
& -1 &  &  \\ 
&  &  & I_{n-3}%
\end{pmatrix}%
\end{equation*}%
). If $x_{2}\neq 0,$ we have$\ $%
\begin{equation*}
(\Pi _{i=3}^{n}e_{2i}(x_{2}^{-1}x_{i}))^{-1}\cdot A\cdot \Pi
_{i=3}^{n}e_{2i}(x_{2}^{-1}x_{i})=e_{12}(x_{2}).
\end{equation*}%
For any nonzero element $x\in \mathbb{Z}/p,$ $%
e_{23}(-x_{2}^{-1}x)e_{12}(x_{2})e_{12}(x_{2}^{-1}x)$ has $x$ as its $(1,3)$%
-th entry. After permuting with $P_{23},$ the $(1,2)$-entry of 
\begin{equation*}
P_{23}e_{23}(-x_{2}^{-1}x)e_{12}(x_{2})e_{12}(x_{2}^{-1}x)P_{23}^{-1}
\end{equation*}%
is $x.$ Therefore, any nontrivial element in $D$ is conjugate to $e_{12}(1),$
by taking $x=1.$
\end{proof}

\bigskip

We prove Theorem \ref{main} by induction on the dimension $r.$ As a
induction step, we prove the following result first.

\begin{theorem}
\label{main copy(1)}Let $X$ be a homology manifold over $\mathbb{Z}/2$ with
Betti numbers $b_{i}(X;\mathbb{Z}/2)$ of dimension $r\leq 2.$ Suppose that $%
\Sigma _{i=0}^{r}b_{i}(X;\mathbb{Z}/2)\leq 4$. When $r<n-1,$ any group
action of $\mathrm{SL}_{n}(\mathbb{Z})$ $(n\geq 3)$ on $X$ by homeomorphisms
is trivial$.$
\end{theorem}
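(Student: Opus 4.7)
The plan is to handle the three cases $r=0,1,2$ separately, after first reducing to a connected $X$. The $\mathrm{SL}_{n}(\mathbb{Z})$-action permutes the at most $b_{0}(X;\mathbb{Z}/2)\leq 4$ connected components of $X$, inducing a homomorphism $\mathrm{SL}_{n}(\mathbb{Z})\to S_{4}$. Since $\mathrm{SL}_{n}(\mathbb{Z})$ is perfect for $n\geq 3$ while $S_{4}$ is solvable, this homomorphism is trivial, and I may assume $X$ is connected. The case $r=0$ is then immediate, since $X$ is a single point.

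For $r=1$, a connected homology $1$-manifold over $\mathbb{Z}/2$ is a topological $1$-manifold, hence one of $S^{1},\mathbb{R},[0,1]$, or $[0,\infty)$. When $X=S^{1}$ I would apply Witte's topological theorem \cite{wi} to see that the action factors through a finite subgroup of $\mathrm{Homeo}(S^{1})$, which is cyclic or dihedral and hence solvable; perfectness then forces triviality. For the non-compact and interval cases I would combine the fact that $\mathrm{Homeo}^{+}$ of these spaces is torsion-free (by left-orderability, together with boundary-fixing for the intervals) with normal generation of $\mathrm{SL}_{n}(\mathbb{Z})$ by torsion elements such as $-I$ and signed permutations to conclude triviality.

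For $r=2$, a connected $\mathbb{Z}/2$-homology $2$-manifold is a topological surface, and $\sum b_{i}\leq 4$ restricts the compact cases to $S^{2},T^{2},\mathbb{R}P^{2}$, or the Klein bottle. I would select the involution $\sigma=\mathrm{diag}(-1,-1,1,\dots,1)\in \mathrm{SL}_{n}(\mathbb{Z})$. By the Local Smith Theorem (Lemma \ref{sm}), its fixed set $F=X^{\sigma}$ is a disjoint union of $\mathbb{Z}/2$-homology submanifolds of dimension at most $2$, and Floyd's inequality (Lemma \ref{floy}) gives $\sum b_{i}(F)\leq 4$. The centralizer of $\sigma$ contains a block-diagonal $\mathrm{SL}_{n-2}(\mathbb{Z})$ acting on $F$; when $\dim F\leq 1$ and $n\geq 5$, the $r\leq 1$ cases above already force this subgroup to act trivially on $F$. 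When $n=4$, I would invoke the classical results of Bridson--Vogtmann \cite{bv} for $S^{2}$ and Weinberger \cite{we} for $T^{2}$, and use Euler characteristic parity from Lemmas \ref{floy} and \ref{borel} to treat $\mathbb{R}P^{2}$ and the Klein bottle.

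The principal obstacle is propagating triviality from fixed sets of specific involutions back to the whole $\mathrm{SL}_{n}(\mathbb{Z})$-action on $X$. My approach is to apply Lemma \ref{sym} to the induced action on the at most four connected components of $F$, producing an order-two element whose image fixes a component, and then invoke Lemma \ref{finite conj} to conjugate this triviality across the elementary abelian subgroup $D\cong(\mathbb{Z}/2)^{n-1}$ obtained from the mod $2$ reductions of $e_{1i}(1)$. Finally, Lemma \ref{mann} rules out any free elementary $2$-action of rank $n-1\geq 3$ on $X$ (which holds once $n\geq 4$), yielding a contradiction unless the original action was trivial.
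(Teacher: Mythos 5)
Your outline for $r\le 1$ and the reduction to connected $X$ are fine (the paper simply quotes Witte here), but the $r=2$ case --- the actual content of the theorem --- is not established, because you never set up the mechanism that converts fixed-point information about one involution into triviality of the whole $\mathrm{SL}_n(\mathbb{Z})$-action. Ruling out a free action of $\langle A_1,A_2,A_3\rangle$ via Lemma \ref{mann} only tells you that \emph{some} involution in that group has a nonempty fixed set; it is not a contradiction and not triviality. The paper's chain is: Lemma \ref{Bredon} forces $\dim_2\mathrm{Fix}$ of an involution to be $0$ or $2$ (the action is orientation-preserving by perfectness); if it is $2$, invariance of domain makes that involution act trivially, so the action factors through $\mathrm{SL}_n(\mathbb{Z}/2)$ (or, when the trivial involution is $\mathrm{diag}(-1,-1,-1,-1,1,\dots,1)$ with $n=4$, only through $\mathrm{PSL}_4(\mathbb{Z})$), and then a \emph{second} fixed-point analysis inside that quotient --- the subgroup $D=\langle e_{1i}(1)\rangle$, conjugacy of its nontrivial elements (Lemma \ref{finite conj}), Lemma \ref{sym} applied to the permutation action of the centralizer $\mathrm{SL}_{n-2}(\mathbb{Z}/2)\ltimes(\mathbb{Z}/2)^{n-2}$ on the at most four fixed points, and Borel's formula (Lemma \ref{borel}) --- combined with simplicity of $\mathrm{SL}_n(\mathbb{Z}/2)$ finishes; if it is $0$, the centralizer acting on the at most four fixed points plus Borel's formula produces another involution with two-dimensional fixed set. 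Your closing paragraph instead applies Lemma \ref{sym} to components of $F$, uses Lemma \ref{finite conj} ``to conjugate triviality,'' and ends with Lemma \ref{mann} ``yielding a contradiction unless the action was trivial''; none of these steps, as stated, yields the conclusion. Moreover your chosen $\sigma=\mathrm{diag}(-1,-1,1,\dots,1)$ may well have empty fixed set: the genuinely delicate situation is when only $\mathrm{Fix}(\mathrm{diag}(-1,-1,-1,-1,1,\dots,1))\neq\emptyset$, and at $n=4$ that element is central, the action only factors through $\mathrm{PSL}_4(\mathbb{Z})$, and the paper needs a separate argument with the order-four matrices $\sigma,\sigma_1$; your proposal does not address this case at all.

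There are also secondary gaps in your $n=4$ fallback. The theorem does not assume $X$ compact, so listing $S^2$, $T^2$, $\mathbb{R}P^2$ and the Klein bottle omits noncompact surfaces satisfying the Betti number bound (e.g. $\mathbb{R}^2$, $S^1\times\mathbb{R}$, the open M\"obius band, a punctured torus), which your plan leaves untreated; citing Weinberger \cite{we} for a \emph{topological} action on $T^2$ is not legitimate here, since his theorem is for diffeomorphisms and the topological torus case is obtained in this paper only later, by the lifting argument of Theorem \ref{main2}, not by the present theorem; and ``Euler characteristic parity'' is an observation, not an argument, for $\mathbb{R}P^2$ and the Klein bottle. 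By contrast, the paper never classifies surfaces: it works directly with fixed-point sets of involutions, which is what makes the noncompact and nonorientable cases come along for free. To repair your proposal you would need to add the invariance-of-domain step, the factorization through $\mathrm{SL}_n(\mathbb{Z}/2)$ (resp. $\mathrm{PSL}_4(\mathbb{Z})$) once an involution acts trivially, and the second round of Smith--Borel analysis in that quotient.
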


\begin{proof}
Since $r\leq 2,$ $X$ is a topological manifold (cf. \cite{b}, 16.32, p.388).
We prove the theorem by induction on $r.$ When $r=1,$ $X$ is the disjoint
union of several copies of $S^{1}$ and $\mathbb{R}^{1}.$ Since $\mathrm{SL}%
_{n}(\mathbb{Z})$ $(n\geq 3)$ is perfect, the group action preserves each
component. It is already known that $\mathrm{SL}_{n}(\mathbb{Z})$ $(n\geq 3)$
can only act trivially on $S^{1}$ or $\mathbb{R}^{1}$ (cf. Witte \cite{wi}).

Suppose $r=2.$ Let $D=\{\mathrm{diag}(\pm 1,\pm 1,\ldots ,\pm 1)<\mathrm{SL}%
_{n}(\mathbb{Z})\}\cong (\mathbb{Z}/2)^{n-1}.$ Let $A_{i}=\mathrm{diag}%
(-1,\ldots ,-1,\ldots ,1),$ where the second $-1$ is in the $(i+1,i+1)$-th
position$.$ By Lemma \ref{mann}, the subgroup $A=\langle
A_{1},A_{2},A_{3}\rangle $ can't act freely on $X.$ There are several cases
to consider.

\begin{enumerate}
\item[\textbf{Case (1)}] $\mathrm{Fix}(A_{1}A_{2}A_{3})=\emptyset .$
\end{enumerate}

By Lemma \ref{mann}, $\mathrm{Fix}(A_{1})\neq \emptyset ,$ since all other
nontrivial elements in $A$ except $A_{1}A_{2}A_{3}$ are conjugate.

\begin{enumerate}
\item[\textbf{Case (1.1)}] $\dim _{2}\mathrm{Fix}(A_{1})=2.$
\end{enumerate}

By the invariance of domain, $\mathrm{Fix}(A_{1})=X.$ Let $N$ be the normal
subgroup generated by $A_{1}.$ Then $\mathrm{SL}_{n}(\mathbb{Z})/N\cong 
\mathrm{SL}_{n}(\mathbb{Z}/2)$ (cf. Ye \cite{ye}, Proposition 3.4). This
means that the group action of $\mathrm{SL}_{n}(\mathbb{Z})$ factors through 
$\mathrm{SL}_{n}(\mathbb{Z}/2).$ Let $e_{12}(1)\in \mathrm{SL}_{n}(\mathbb{Z}%
/2).$ Since the subgroup $D$ generated by $\{e_{1i}(1),i=2,\ldots ,n\}$ is
isomorphic to $(\mathbb{Z}/2)^{n-1}.$ Using Lemma \ref{mann} once again, $A$
can't act freely on $X$ (note that $n\geq 4$ when $r=2$). Since any two
nonzero elements in $D$ are conjugate in $\mathrm{SL}_{n}(\mathbb{Z}/2)$
(cf. Lemma \ref{finite conj}), the fixed point set $\mathrm{Fix}%
(e_{12}(1))\neq \emptyset .$ We consider two cases.

\begin{description}
\item[Case (1.1.1)] $\dim _{2}\mathrm{Fix}(e_{12}(1))=2.$
\end{description}

By the invariance of domain, we have $\mathrm{Fix}(e_{12}(1))=X.$ This means
that $e_{12}(1)$ acts trivially on $X.$ Note fact that $\mathrm{SL}_{n}(%
\mathbb{Z}/2)$ is a simple group for $n\geq 3$ and thus $e_{12}(1)$ normally
generates the whole group $\mathrm{SL}_{n}(\mathbb{Z}/2).$ Therefore, the
group action of $\mathrm{SL}_{n}(\mathbb{Z}/2)$ on $X$ is trivial.

\begin{description}
\item[Case (1.1.2)] $\dim _{2}\mathrm{Fix}(e_{12}(1))<2.$
\end{description}

Since $\mathrm{SL}_{n}(\mathbb{Z})$ $(n\geq 3)$ is perfect, the group action
on $X$ is orientation-preverving. By Lemma \ref{Bredon}, $\dim _{p}\mathrm{%
Fix}(e_{12}(1))=0.$ Thus, $r=2$ and $n\geq 4.$ This means that $\mathrm{Fix}%
(e_{12}(1))$ is a discrete set consisting of at most $4$ points, considering
Lemma \ref{floy}. Note that the centralizer $C_{\mathrm{SL}_{n}(\mathbb{Z}%
/2)}(e_{12}(1))$ leaves $\mathrm{Fix}(e_{12}(1))$ invariant and $C_{\mathrm{%
SL}_{n}(\mathbb{Z})}(e_{12}(1))$ contains a copy of $\mathrm{SL}_{n-2}(%
\mathbb{Z}/2)\ltimes (\mathbb{Z}/2)^{n-2},$ where $\mathrm{SL}_{n-2}(\mathbb{%
Z}/2)$ acts on $(\mathbb{Z}/2)^{n-2}$ by matrix multiplications. Since the
permutation group of $\mathrm{Fix}(e_{12}(1))$ is at most $S_{4},$ there is
an element $b\in \mathrm{SL}_{n-2}(\mathbb{Z}/2)\ltimes (\mathbb{Z}/2)^{n-2}$
of order 2 such that $\mathrm{Fix}(b)\cap \mathrm{Fix}(e_{12}(1))\neq
\emptyset $, according to Lemma \ref{sym}. If $\dim _{p}\mathrm{Fix}(b)=2,$
a similar argument as Case (1.1.1) shows that the group action of $\mathrm{SL%
}_{n}(\mathbb{Z})$ is trivial. Otherwise, $\dim _{p}\mathrm{Fix}(b)=0$ by %
\ref{Bredon}. Let $B=\langle e_{12}(1),b\rangle \cong (\mathbb{Z}/2)^{2}.$
Note that $\mathrm{Fix}(B)\neq \emptyset .$ Write $m=\dim _{2}(\mathrm{Fix}%
(B))$ and $n(H)=\dim _{2}(\mathrm{Fix}(H))$ for each non-trivial cyclic
subgroup $H<B.$ By Lemma \ref{borel}, 
\begin{equation*}
r-m=\sum_{H}(n(H)-m).
\end{equation*}%
The only nonzero term in the right hand side is $n(\langle e_{12}(1)b\rangle
)=\dim _{p}\mathrm{Fix}(e_{12}(1)b)=2.$ A similar argument as (1.1.1) shows
that the group action of $\mathrm{SL}_{n}(\mathbb{Z})$ is trivial.

\begin{description}
\item[Case (1.2)] $\dim _{2}\mathrm{Fix}(A_{1})<2.$
\end{description}

By Lemma \ref{Bredon}, $\dim _{2}\mathrm{Fix}(A_{1})=0.$ Considering Lemma %
\ref{floy}, $\mathrm{Fix}(A_{1})$ is a discrete set consisting of at most $4$
points. The centralizer $C_{\mathrm{SL}_{n}(\mathbb{Z})}(A_{1})$ leaves $%
\mathrm{Fix}(A_{1})$ invariant. The action of $C_{\mathrm{SL}_{n}(\mathbb{Z}%
)}(A_{1})$ restricts to be an action of $(\mathbb{Z}/2)^{3}$ generated by $%
A_{2},A_{3}$ and 
\begin{equation*}
a=%
\begin{pmatrix}
& 1 &  \\ 
-1 &  &  \\ 
&  & I_{n-2}%
\end{pmatrix}%
.
\end{equation*}%
By Lemma \ref{mann}, $\ $a nontrivial element $c\in (\mathbb{Z}/2)^{3}$ acts
trivially on $\mathrm{Fix}(A_{1}).$ If $\dim _{2}\mathrm{Fix}(c)=2,$ we can
continue the proof as Case (1.1). Suppose $\dim _{2}\mathrm{Fix}(c)=0.$ Let $%
A=\langle A_{1},c\rangle .$ Write $m=\dim _{2}(\mathrm{Fix}(A))$ and $%
n(H)=\dim _{2}(\mathrm{Fix}(H))$ for each non-trivial cyclic subgroup $H<A.$
By Lemma \ref{borel}, 
\begin{equation*}
r-m=\sum_{H}(n(H)-m),
\end{equation*}%
where $H\ $runs through the nontrivial cyclic subgroups of $G$. Note that $%
r=2$ and $m=0.$ We see that $\dim _{2}\mathrm{Fix}(A_{1}c)=2.$ By the
invariance of domain, $A_{1}c$ acts trivially on $X$. Therefore, the group
action of $\mathrm{SL}_{n}(\mathbb{Z})$ factors through $\mathrm{SL}_{n}(%
\mathbb{Z}/2).$ A similar argument as in Case (1.1) shows that the group
action of $\mathrm{SL}_{n}(\mathbb{Z})$ is trivial.

\begin{description}
\item 
\begin{enumerate}
\item[\textbf{Case (2)}] $\mathrm{Fix}(A_{1}A_{2}A_{3})\neq \emptyset .$
\end{enumerate}
\end{description}

If $\mathrm{Fix}(A_{1})\neq \emptyset ,$ we may do a similar argument as
case (1) to conclude that the action of $\mathrm{SL}_{n}(\mathbb{Z})$ is
trivial. We suppose $\mathrm{Fix}(A_{1})=\emptyset .$ There are several
cases to consider.

\begin{description}
\item[Case (2.1)] $\dim _{2}\mathrm{Fix}(A_{1}A_{2}A_{3})=2.$
\end{description}

By the invariance of domain, the action of $A_{1}A_{2}A_{3}$ is trivial. If $%
n>4,$ the group action of $\mathrm{SL}_{n}(\mathbb{Z})$ factors through $%
\mathrm{SL}_{n}(\mathbb{Z}/2)$ (cf. Ye \cite{ye}, Proposition 3.4). We could
finish the proof by a similar argument as Case (1.1). If $n=4,$ the group
action of $\mathrm{SL}_{n}(\mathbb{Z})$ factors through $\mathrm{PSL}_{n}(%
\mathbb{Z}),$ the projective linear group. Let 
\begin{equation*}
\sigma =%
\begin{pmatrix}
& 1 &  &  \\ 
-1 &  &  &  \\ 
&  &  & 1 \\ 
&  & -1 & 
\end{pmatrix}%
\in \mathrm{SL}_{4}(\mathbb{Z}).
\end{equation*}
Since the subgroup $B=\langle A_{1},A_{2},\sigma \rangle \cong (\mathbb{Z}%
/2)^{3}<\mathrm{PSL}_{n}(\mathbb{Z}),$ we have that the fixed point set $%
\mathrm{Fix}(\sigma ),\mathrm{Fix}(A_{1}\sigma ),\mathrm{Fix}%
(A_{1}A_{2}\sigma )$ or $\mathrm{Fix}(A_{2}\sigma )$ is not empty. Without
loss of generality, assume that $\mathrm{Fix}(\sigma )\neq \emptyset .$

\begin{description}
\item[Case (2.1.1)] $\dim _{2}\mathrm{Fix}(\sigma )=2.$ By the invariance of
domain, $\sigma $ acts trivially on $X.$ Note that $\sigma $ normally
generated the whole group $\mathrm{SL}_{n}(\mathbb{Z}).$ Thus, the group
action of $\mathrm{SL}_{n}(\mathbb{Z})$ is trivial.

\item[Case (2.1.2)] $\dim _{2}\mathrm{Fix}(\sigma )<2.$
\end{description}

By Lemma \ref{Bredon}, $\dim _{2}\mathrm{Fix}(\sigma )=0.$ Since $\mathrm{Fix%
}(A_{1})=\mathrm{Fix}(A_{2})=\mathrm{Fix}(A_{1}A_{2})=\emptyset $ and $%
\langle A_{1},A_{2}\rangle \cong (\mathbb{Z}/2)^{2}$ acts freely on $\mathrm{%
Fix}(\sigma ),$ the set $\mathrm{Fix}(\sigma )$ consists of $4$ points.
Moreover, $A_{1},A_{2}$ and $A_{1}A_{2}$ acts on $\mathrm{Fix}(\sigma )$ by
products of two disjoint transpositions. Let 
\begin{equation*}
\sigma _{1}=%
\begin{pmatrix}
&  & 1 &  \\ 
&  &  & 1 \\ 
1 &  &  &  \\ 
& 1 &  & 
\end{pmatrix}%
\in \mathrm{SL}_{4}(\mathbb{Z}/2).
\end{equation*}%
Since $\sigma _{1}\sigma =\sigma \sigma _{1},$ $\sigma _{1}$ preserves $%
\mathrm{Fix}(\sigma ).$ If $\mathrm{Fix}(\sigma _{1})\cap \mathrm{Fix}%
(\sigma )\neq \emptyset ,$ then $\dim _{2}\mathrm{Fix}(\sigma \sigma _{1})=2$
by Lemma \ref{borel}. A similar argument as Case (2.1.1) shows that the
group action of $\mathrm{SL}_{n}(\mathbb{Z})$ is trivial. If $\mathrm{Fix}%
(\sigma _{1})\cap \mathrm{Fix}(\sigma )=\emptyset ,$ then $\sigma _{1}$ acts
on $\mathrm{Fix}(\sigma )$ identically as one of $\{A_{1},A_{2},A_{1}A_{2}%
\}. $ Without loss of generality, assume that $\sigma _{1}|_{\mathrm{Fix}%
(\sigma )}=A_{1}|_{\mathrm{Fix}(\sigma )}.$ By Lemma \ref{borel}, $\dim _{2}%
\mathrm{Fix}(A_{1}^{-1}\sigma _{1})=2$ and a similar argument as Case
(2.1.1) shows that the group action of $\mathrm{SL}_{n}(\mathbb{Z})$ is
trivial.

\begin{description}
\item[Case (2.2)] $\dim _{2}\mathrm{Fix}(A_{1}A_{2}A_{3})<2.$
\end{description}

By Lemma \ref{Bredon}, $\dim _{p}\mathrm{Fix}(A_{1}A_{2}A_{3})=0.$ Then the $%
\mathrm{SL}_{4}(\mathbb{Z})$ lying in the upper left corner of $\mathrm{SL}%
_{n}(\mathbb{Z})$ preserves $\mathrm{Fix}(A_{1}A_{2}A_{3}).$ Since $\mathrm{%
Fix}(A_{1}A_{2}A_{3})$ consists at most 4 points and $\mathrm{SL}_{4}(%
\mathbb{Z})=[\mathrm{SL}_{4}(\mathbb{Z}),\mathrm{SL}_{4}(\mathbb{Z})],$ the
group action of $\mathrm{SL}_{4}(\mathbb{Z})$ on $\mathrm{Fix}%
(A_{1}A_{2}A_{3})$ is trivial. This is a contradiction to the fact that $%
\mathrm{Fix}(A_{1})=\emptyset .$ The proof is finished.
\end{proof}

\bigskip

We now start to prove Theorem \ref{main}. Note that for locally finite
CW-complexes, the Borel-Moore homology coincides with singular homology. The
strategy of the proof is similar to that of Theorem \ref{main copy(1)}. We
need care about the fixed point sets and the sizes of matrix groups.

\begin{proof}[Proof of Theorem \protect\ref{main}]
We prove the theorem by induction on $r.$ By Theorem \ref{main copy(1)}, we
may suppose that the statement holds for $r\leq k-1$. We now consider the
general case for $r=k$ and $k\geq 3.$ Let $A_{i}=\mathrm{diag}(-1,\ldots
,-1,\ldots ,1),$ where the second $-1$ is in the $(i+1,i+1)$-th position$.$
By Lemma \ref{mann}, the subgroup $\langle A_{1},A_{2},A_{3}\rangle $ can't
act freely on $X.$ There are several cases to consider.

\begin{enumerate}
\item[\textbf{Case (i)}] $\mathrm{Fix}(A_{1}A_{2}A_{3})=\emptyset .$
\end{enumerate}

We have $\mathrm{Fix}(A_{1})\neq \emptyset ,$ since all nontrivial element
except $A_{1}A_{2}A_{3}$ in $\langle A_{1},A_{2},A_{3}\rangle $ are
conjugate.

\begin{description}
\item[Case (i.1)] $\dim _{2}\mathrm{Fix}(A_{1})=k.$
\end{description}

The group action of $\mathrm{SL}_{n}(\mathbb{Z})$ factors through $\mathrm{SL%
}_{n}(\mathbb{Z}/2).$ Similar argument as in Case (1.1) of the proof of
Theorem \ref{main copy(1)} shows that the group action of $\mathrm{SL}_{n}(%
\mathbb{Z}/2)$ is trivial.

\begin{description}
\item[Case (i.2)] $\dim _{2}\mathrm{Fix}(A_{1})<k.$
\end{description}

When the group action of $\mathrm{SL}_{n}(\mathbb{Z})$ is by diffeomorphisms
or piecewise linear homeomorphisms, the fixed point set $\mathrm{Fix}(A_{1})$
is a manifold. Since $\mathrm{SL}_{n}(\mathbb{Z})$ $(n\geq 3)$ is perfect,
i.e. $\mathrm{SL}_{n}(\mathbb{Z})=[\mathrm{SL}_{n}(\mathbb{Z}),\mathrm{SL}%
_{n}(\mathbb{Z})],$ the group action is orientation-preserving. When the
action is by homeomorphisms and $r\leq 4,$ the fix point set $\mathrm{Fix}%
(A_{1})$ is of codimension at least 2 by Lemma \ref{Bredon}. Then $\mathrm{%
Fix}(A_{1})$ is a manifold as well (cf. \cite{b}, 16.32, p388). Therefore, $%
\mathrm{Fix}(A_{1})$ is of codimension at least 2 in any case by Lemma \ref%
{Bredon}. We will use this fact several times in the proof. Considering
Lemma \ref{floy}, $\mathrm{Fix}(A_{1})$ consists of at most $4$ components.
If $\mathrm{Fix}(A_{1})$ has 4 components, then each component is an acyclic
homology manifold over $\mathbb{Z}/2$ by considering the Betti numbers. Note
that when $k\geq 3,$ we have $n\geq 5$ and $\mathrm{SL}_{n-2}(\mathbb{Z})$
is a perfect group. The $\mathrm{SL}_{n-2}(\mathbb{Z})$ in the centralizer $%
C_{\mathrm{SL}_{n}(\mathbb{Z})}(A_{1})$ will preserve each component. By
induction, the group action of $\mathrm{SL}_{n-2}(\mathbb{Z})$ on $\mathrm{%
Fix}(A_{1})$ is trivial. Therefore, $c=\mathrm{diag}(1,1,-1,-1,\ldots ,1)$
acts trivially on $\mathrm{Fix}(A_{1}).$ Let $A=\langle A_{1},c\rangle \cong
(\mathbb{Z}/2)^{2}.$ Write $m=\dim _{2}(\mathrm{Fix}(A))$ and $n(H)=\dim
_{2}(\mathrm{Fix}(H))$ for each non-trivial cyclic subgroup $H<A.$ By Lemma %
\ref{borel}, 
\begin{equation*}
k-m=\sum_{H}(n(H)-m),
\end{equation*}%
where the sum is taken over the non-trivial cyclic subgroup of $A.$ We see
that $\dim _{2}\mathrm{Fix}(A_{1}c)=k.$ By the invariance of domain, $A_{1}c$
acts trivially on $X$. Therefore, the group action of $\mathrm{SL}_{n}(%
\mathbb{Z})$ factors through $\mathrm{SL}_{n}(\mathbb{Z}/2).$ Similar
argument as Case (1.1) in the proof of Theorem \ref{main copy(1)} shows that
the group action of $\mathrm{SL}_{n}(\mathbb{Z}/2)$ is trivial. If $\mathrm{%
Fix}(A_{1})$ has 3 components, two of the components must be acyclic by
noting that $\sum b_{i}(F)\leq 4$ (cf. Lemma \ref{floy}). The remaining is
the same as the proof of 4 components. If $\mathrm{Fix}(A_{1})$ has at most
2 components, the $\mathrm{SL}_{n-2}(\mathbb{Z})$ in the centralizer $C_{%
\mathrm{SL}_{n}(\mathbb{Z})}(A_{1})$ will preserve each component. By
induction, the group action of $\mathrm{SL}_{n-2}(\mathbb{Z})$ on $\mathrm{%
Fix}(A_{1})$ is trivial. The remaining is the same as the proof of 4
components.

\begin{enumerate}
\item[\textbf{Case (ii)}] $\mathrm{Fix}(A_{1}A_{2}A_{3})\neq \emptyset .$
\end{enumerate}

\begin{description}
\item[Case (ii.1)] $\dim _{2}$ $\mathrm{Fix}(A_{1}A_{2}A_{3})=k.$
\end{description}

By the invariance of domain, $A_{1}A_{2}A_{3}$ acts trivially on $X$. The
group factors through $\mathrm{SL}_{n}(\mathbb{Z}/2).$ A similar argument as
in Case (1.1.1) in the proof of Theorem \ref{main copy(1)} shows that the
group action of $\mathrm{SL}_{n}(\mathbb{Z}/2)$ is trivial.

\begin{description}
\item[Case (ii.2)] $\dim _{2}$ $\mathrm{Fix}(A_{1}A_{2}A_{3})<k.$
\end{description}

If $\mathrm{Fix}(A_{1})\neq \emptyset ,$ we may continue the proof as case
(i). Suppose $\mathrm{Fix}(A_{1})=\emptyset .$ Let 
\begin{equation*}
\sigma =%
\begin{pmatrix}
& 1 &  &  \\ 
-1 &  &  &  \\ 
&  &  & 1 \\ 
&  & -1 & 
\end{pmatrix}%
\in \mathrm{SL}_{4}(\mathbb{Z})
\end{equation*}%
lying in the upper left corner of $\mathrm{SL}_{n}(\mathbb{Z}).$ Since $%
A_{1}A_{2}A_{3}$ acts trivially on $\mathrm{Fix}(A_{1}A_{2}A_{3}),$ the
group action of $\mathrm{SL}_{4}(\mathbb{Z})$ factors through $\mathrm{PSL}%
_{4}(\mathbb{Z}).$ In $\mathrm{PSL}_{4}(\mathbb{Z}),$ the image of $\langle
A_{1},A_{2},\sigma \rangle $ is isomorphic to $(\mathbb{Z}/2)^{3}.$ By Lemma %
\ref{mann}, this subgroup can't act freely on $\mathrm{Fix}%
(A_{1}A_{2}A_{3}). $ Thus, 
\begin{equation*}
\mathrm{Fix}(\sigma )\cap \mathrm{Fix}(A_{1}A_{2}A_{3})\neq \emptyset .
\end{equation*}%
If $\dim _{2}\mathrm{Fix}(\sigma )=k,$ the group action of $\mathrm{SL}_{n}(%
\mathbb{Z})$ is trivial by a similar argument as Case (ii.1). If $\dim _{2}%
\mathrm{Fix}(\sigma )<k$ and 
\begin{equation*}
\dim _{2}\mathrm{Fix}(\sigma )\cap \mathrm{Fix}(A_{1}A_{2}A_{3})=\dim _{2}%
\mathrm{Fix}(A_{1}A_{2}A_{3}),
\end{equation*}%
the group action of $\mathrm{PSL}_{4}(\mathbb{Z})$ is trivial. This is a
contradiction to the fact that $\mathrm{Fix}(A_{1})=\emptyset .$ If 
\begin{equation*}
\dim _{2}\mathrm{Fix}(\sigma )\cap \mathrm{Fix}(A_{1}A_{2}A_{3})<\dim _{2}%
\mathrm{Fix}(A_{1}A_{2}A_{3}),
\end{equation*}%
then $\dim _{2}\mathrm{Fix}(A_{1}A_{2}A_{3})-\dim _{2}\mathrm{Fix}(\sigma
)\cap \mathrm{Fix}(A_{1}A_{2}A_{3})$ is at least $2.$ The group $\mathrm{SL}%
_{n-4}(\mathbb{Z})$ lying in the lower right corner of $\mathrm{SL}_{n}(%
\mathbb{Z})$ acts on $\mathrm{Fix}(\sigma )\cap \mathrm{Fix}%
(A_{1}A_{2}A_{3}),$ which is codimension at least $4$ in $X.$ If $\dim _{2}%
\mathrm{Fix}(A_{1}A_{2}A_{3})\leq 2,$ the $\mathrm{SL}_{4}(\mathbb{Z})$
lying in the upper left corner of $\mathrm{SL}_{n}(\mathbb{Z})$ acts
trivially on $\mathrm{Fix}(A_{1}A_{2}A_{3})$ by Theorem \ref{main copy(1)}.
This is impossible by the assumption that $\mathrm{Fix}(A_{1})=\emptyset $.
If $\dim _{2}\mathrm{Fix}(A_{1}A_{2}A_{3})\geq 3,$ we have $k\geq 5$ and $%
n\geq 7.$ By induction, the group action of $\mathrm{SL}_{n-4}(\mathbb{Z})$
on $\mathrm{Fix}(\sigma )\cap \mathrm{Fix}(A_{1}A_{2}A_{3})$ is trivial.
This is a contradiction to the fact that $\mathrm{Fix}(A_{1})=\emptyset ,$
since $A_{1}$ and $\mathrm{diag}(1,,\ldots ,-1,-1)$ are conjugate. The whole
proof is finished.
\end{proof}

\bigskip

\section{Topological actions}

In this section, we will study topological actions of $\mathrm{SL}_{n}(%
\mathbb{Z})$ on manifolds with few Betti numbers. In order to prove Theorem %
\ref{main3}, we need some lemmas.

The effective group actions of elementary $p$-group on homology manifolds
with few Betti numbers was already studied by Mann \cite{mann} and Mann-Su 
\cite{ms}. The compact case of the following lemma was first obtained by
Mann \cite{mann} (Theorem 3.2).

\begin{lemma}
\label{eff}We have the following.

\begin{enumerate}
\item[(i)] If $r<d-2,$ the group $(\mathbb{Z}/2)^{d}$ cannot act effectively
on a $r$-dimensional homology manifold $X$ over $\mathbb{Z}/2$ with Betti
numbers $\Sigma _{i=0}^{r}b_{i}(X;\mathbb{Z}/2)\leq 4$.

\item[(ii)] Let $p>2$ be a prime. If $r<2d-2,$ the group $(\mathbb{Z}/p)^{d} 
$ cannot act effectively on a $r$-dimensional homology manifold $X$ over $%
\mathbb{Z}/p$ with Betti numbers $\Sigma _{i=0}^{r}b_{i}(X;\mathbb{Z}/p)\leq
4$.
\end{enumerate}
\end{lemma}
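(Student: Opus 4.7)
My plan is to prove both parts by induction on the rank $d$. For the base case of (i), namely $d=3$, one has $r<1$, so $r=0$; the Betti-sum bound then forces $X$ to be a discrete set of at most $4$ points, and $(\mathbb{Z}/2)^{3}$ of order $8$ cannot act effectively on it since $S_{4}$ has no rank-$3$ elementary abelian $2$-subgroup (its Sylow-$2$-subgroup is $D_{4}$). For the base case of (ii), $d=2$, the nontrivial cases are $r\leq 1$; here Lemma \ref{sm} (all components of fixed sets have even codimension) forces any nontrivial element either to fix all of $X$ or to act freely, reducing the problem to ruling out effective $(\mathbb{Z}/p)^{2}$-actions on a discrete or $1$-dimensional hm with at most $4$ components, which follows from the rank-$1$ bound on finite $p$-subgroups of $\mathrm{Homeo}(S^{1})$ for $p$ odd.

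For the inductive step, assume the assertion for all smaller ranks and let $G=(\mathbb{Z}/p)^{d}$ act effectively on $X$ with $r<d-2$ (case (i)) or $r<2d-2$ (case (ii)). Since $d\geq 3$ in the inductive range, Lemma \ref{mann} forbids $G$ from acting freely, so some $g\in G\setminus\{e\}$ has $\mathrm{Fix}(g)\neq\emptyset$. Effectiveness gives $\mathrm{Fix}(g)\neq X$, and by Lemma \ref{sm} each component $Y$ of $\mathrm{Fix}(g)$ is a hm$_{\mathbb{Z}/p}$ of strictly smaller dimension $s$, with codimension $\geq 1$ for $p=2$ and $\geq 2$ for $p$ odd. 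Lemma \ref{floy} gives $\sum_{i}b_{i}(\mathrm{Fix}(g);\mathbb{Z}/p)\leq 4$, so $\mathrm{Fix}(g)$ has at most $4$ components. Fixing one such $Y$, its stabilizer $G_{Y}\leq G$ has index $\leq 4$, so $\mathrm{rank}(G_{Y})\geq d-2$ in (i) (again because $S_{4}$ has no rank-$3$ elementary abelian $2$-subgroup), while $G_{Y}=G$ for $p\geq 5$ (the index being a $p$-power dividing $4!$), and $\mathrm{rank}(G_{Y})\geq d-1$ for $p=3$.

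Now let $K\leq G_{Y}$ be the pointwise kernel of the action on $Y$, so that $G_{Y}/K$ acts effectively on $Y$ with $\sum_{i}b_{i}(Y;\mathbb{Z}/p)\leq 4$. The subgroup $K$ contains $g$ and acts trivially on $Y$ but non-trivially on $X$; applying Lemma \ref{borel} to $K$ at a point of $Y$ yields the Smith-theoretic bound $\mathrm{rank}(K)\leq r-s$ in case (i) and $2\,\mathrm{rank}(K)\leq r-s$ in case (ii). Combining with the lower bound on $\mathrm{rank}(G_{Y})$ gives a lower bound on $\mathrm{rank}(G_{Y}/K)$ in terms of $d$, $r$ and $s$. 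Since $\dim_{p}Y=s<r$, the inductive hypothesis applied to the pair $(G_{Y}/K,Y)$ produces an inequality which, combined with the above, contradicts $r<d-2$ (respectively $r<2d-2$).

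The main obstacle I expect is the Smith-theoretic bound on $\mathrm{rank}(K)$ in terms of the codimension $r-s$. In the smooth setting this is the statement that the normal representation of $K$ at a point of $Y$ is faithful, but here it must be extracted purely from Lemma \ref{borel} together with the effectiveness of the ambient $G$-action on $X$, amounting to an equivariant Smith-Gysin accounting of how each independent character of $K$ contributes to $r-s$. The even-codimension refinement of Lemma \ref{sm} for $p$ odd is precisely what yields the factor $2$ in part (ii), and calibrating all the rank and dimension losses so that the induction closes is the delicate part of the argument.
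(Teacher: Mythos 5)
Your proposal diverges from the paper's argument and, as written, has a genuine gap in the inductive step: the arithmetic does not close. Grant your two inputs, namely $\mathrm{rank}(G_{Y})\geq d-2$ for $p=2$ (resp.\ $\geq d-1$ for $p=3$) and $\mathrm{rank}(K)\leq r-s$ (resp.\ $2\,\mathrm{rank}(K)\leq r-s$). Applying the inductive hypothesis to the effective action of $G_{Y}/K$ on $Y$ only gives $s\geq \mathrm{rank}(G_{Y}/K)-2\geq (d-2)-(r-s)-2$, i.e.\ $r\geq d-4$, which is perfectly compatible with $r\leq d-3$; for $p=3$ one gets $r\geq 2d-4$, compatible with $r\leq 2d-3$. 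Only for $p\geq 5$, where $G_{Y}=G$, does your chain of inequalities produce a contradiction: the two ranks (resp.\ one rank) you lose to the component stabilizer, together with the ``$-2$'' built into the statement being induced, consume exactly the margin you need. To rescue this scheme you would need an extra gain in the disconnected case (e.g.\ when $\mathrm{Fix}(g)$ has $3$ or $4$ components each one is acyclic, and a stronger bound than $s\geq d'-2$ would have to be proved for acyclic fixed sets), and none is provided. In addition, the bound $\mathrm{rank}(K)\leq r-s$ is not a consequence of a single application of Lemma \ref{borel}: in Borel's formula the positive terms $n(H)-s$ could a priori be concentrated on few index-$p$ subgroups, so this ``normal faithfulness'' statement needs its own induction on $\mathrm{rank}(K)$ (choosing an index-$p$ subgroup whose fixed set has strictly larger dimension, using Lemma \ref{sm} for the even-codimension gain when $p$ is odd, and using effectiveness plus invariance of domain to start it). You flag this as the main obstacle, correctly, but it is genuine unproved work. (Your base case for (ii) with $r=1$ is also fragile if $X$ may be disconnected: two circles carry an effective $(\mathbb{Z}/p)^{2}$-action through independent rotations, a point the paper itself elides.)

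The paper closes the induction by a different mechanism that avoids both difficulties. One chooses a nontrivial $a\in G$ whose fixed set is \emph{maximal} under inclusion and a complement $G_{0}\cong(\mathbb{Z}/p)^{d-1}$ of $\langle a\rangle$, and applies the inductive hypothesis to $G_{0}$ acting on $\mathrm{Fix}(a)$: the rank drops by only one while the dimension drops by at least $1$ (resp.\ $2$ for odd $p$ by Lemma \ref{sm}), so the hypotheses $r<d-2$ and $r<2d-2$ are inherited with room to spare, with no quotient by a kernel and no passage to a component stabilizer. The conclusion is a nontrivial $b\in G_{0}$ fixing $\mathrm{Fix}(a)$ pointwise; maximality forces $\mathrm{Fix}(b)=\mathrm{Fix}(a)$, hence every nontrivial element of $\langle a,b\rangle$ has the same fixed set, and then Lemma \ref{borel} applied to this rank-two subgroup forces $\dim_{p}\mathrm{Fix}(a)=r$, so $\mathrm{Fix}(a)=X$ by invariance of domain, contradicting effectiveness. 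In other words, the maximality trick replaces your quantitative kernel-rank bound entirely and keeps the rank loss at one, which is precisely why that induction closes and yours does not.
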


\begin{proof}
The case is vacuous for $d=1.$ For $d=2$ and $p$ is an odd prime, it is not
hard to see that $(\mathbb{Z}/p)^{2}$ can not act effectively on a $1$%
-dimensional manifold $X$ with $\Sigma _{i=0}^{r}b_{i}(X;\mathbb{Z}/p)\leq 4$
(note that $1$-hm$_{\mathbb{Z}/p}$ is actually an manifold).

We assume $d\geq 3$ and prove the theorem by induction. Let $G=(\mathbb{Z}%
/p)^{d}$ be a group acting on $X.$ Choose a nontrivial element $a\in G$ such
that the fixed point set $\mathrm{Fix}(a)$ is maximal with respect to
inclusion. Denote by $G_{0}\cong (\mathbb{Z}/p)^{d-1}$ the complement of $%
\langle a\rangle $ in $G.$ By Lemma \ref{mann}, $\mathrm{Fix}(a)$ is not
empty. If $\mathrm{Fix}(a)=X,$ we are done. If $\mathrm{Fix}(a)\neq X,$ the
codimension of $\mathrm{Fix}(a)$ is at least $1$ for $p=2$ and at least $2$
for $p$ odd by Lemma \ref{sm}. By Lemma \ref{floy}, 
\begin{equation*}
\Sigma _{i=0}^{r}b_{i}(\mathrm{Fix}(a);\mathbb{Z}/p)\leq \Sigma
_{i=0}^{r}b_{i}(X;\mathbb{Z}/p)\leq 4.
\end{equation*}%
Applying induction to the action of $G_{0}$ on $\mathrm{Fix}(a),$ there is a
nontrivial element $b\in G_{0}$ fixing $\mathrm{Fix}(a)$ pointwise. By the
maximality of $\mathrm{Fix}(a),$ we have $\mathrm{Fix}(a)=\mathrm{Fix}(b).$
Let $A=\langle a,b\rangle .$ For any nontrivial element $x\in A,$ we get $%
\mathrm{Fix}(a)=\mathrm{Fix}(x).$ Write $m=\dim _{p}(\mathrm{Fix}(A))$ and $%
n(H)=\dim _{p}(\mathrm{Fix}(H))$ for each non-trivial cyclic subgroup $H<A.$
By Lemma \ref{borel}, 
\begin{equation*}
r-m=\sum_{H}(n(H)-m),
\end{equation*}%
where the sum is taken over the non-trivial cyclic subgroup of $A.$ We have
just proved $n(H)=m$ for any $H.$ Thus $r=m.$ By the invariance of domain,
we get $\mathrm{Fix}(a)=X,$ which is a contradiction.
\end{proof}

\bigskip

\begin{proof}[Proof of Theorem \protect\ref{main3}]
We prove (i) first. Let $A_{i}=\mathrm{diag}(-1,\ldots ,-1,\ldots ,1)\in 
\mathrm{SL}_{n}(\mathbb{Z}),$ where the second $-1$ is in the $(i+1,i+1)$-th
position$.$ The subgroup $\langle A_{i},i=1,2,\ldots ,n-1\rangle \cong (%
\mathbb{Z}/2)^{n-1}.$ When $n$ is odd, the center of $\mathrm{SL}_{n}(%
\mathbb{Z})$ is trivial. By Lemma \ref{eff}, when $r<n-3$ if $n$ is odd, or $%
r<n-4$ if $n$ is even, there is a noncentral order-two element in $\mathrm{SL%
}_{n}(\mathbb{Z})$ acts trivially on $X.$ Therefore, the group action of $%
\mathrm{SL}_{n}(\mathbb{Z})$ factors through $\mathrm{SL}_{n}(\mathbb{Z}/2).$
Let $e_{1i}(1)$ denote the matrix with $1s$ along the diagonal, $1$ in the $%
(1,i)$-th position and $0s$ elsewhere. We have $\langle
e_{1i}(1),i=2,3,\ldots ,n\rangle \cong (\mathbb{Z}/2)^{n-1}.$ By Lemma \ref%
{eff}, there is a nontrivial element acting trivially on $X.$ Since $\mathrm{%
SL}_{n}(\mathbb{Z}/2)$ $(n\geq 3)$ is simple, the group action of $\mathrm{SL%
}_{n}(\mathbb{Z}/2)$ and thus that of $\mathrm{SL}_{n}(\mathbb{Z})$ is
trivial.

For (ii), let 
\begin{equation*}
B=%
\begin{pmatrix}
0 & 1 \\ 
-1 & -1%
\end{pmatrix}%
.
\end{equation*}%
It is directly that $\mathrm{SL}_{n}(\mathbb{Z})$ contains $[\frac{n}{2}]$
copies of $\langle B\rangle ,$ which is isomorphic to $(\mathbb{Z}/3)^{[%
\frac{n}{2}]}$ . Here $[\frac{n}{2}]$ is the integral part of $\frac{n}{2}.$
By Lemma \ref{eff}, there is a nontrivial element in this $(\mathbb{Z}/3)^{[%
\frac{n}{2}]}$ acting trivially on $X,$ when $r<n-2$ if $n$ is even, or $%
r<n-3$ if $n$ is odd. The nontrivial element normally generates the whole
group $\mathrm{SL}_{n}(\mathbb{Z}).$ Therefore, the group action of $\mathrm{%
SL}_{n}(\mathbb{Z})$ is trivial.

We now prove (iii). For $1\leq i\neq j\leq n,$ let $e_{ij}(1)$ denote the
matrix with $1$s along the diagonal and $1$ in the $(i,j)$-th position and
zeros elsewhere. Let $D<\mathrm{SL}_{n}(\mathbb{Z}/p)$ be the subgroup
generated by $\{e_{1i}(1),i=2,\ldots ,n\}$. It is not hard to see that $D$
is isomorphic to $(\mathbb{Z}/p)^{n-1}.$ When $r<n-3$ for $p=2$ or $r<2n-4$
for odd $p,$ there is a nontrivial element $\sigma $ in $D$ acting trivially
on $X.$ However, $\mathrm{PSL}_{n}(\mathbb{Z}/p)$ is simple and thus a
noncentral element $\sigma $ normally generates the whole group. Therefore,
the group action of $\mathrm{SL}_{n}(\mathbb{Z}/p)$ is trivial.
\end{proof}

\section{Lifting group actions}

Let $\mathrm{SL}_{n}(\mathbb{Z})$ act on the Euclidean space $\mathbb{R}^{n}$
by matrix multiplications. It induces an action on the torus $T^{n}=\mathbb{R%
}^{n}/\mathbb{Z}^{n}.$ Weinberger \cite{we} proves that any smooth action of 
$\mathrm{SL}_{n}(\mathbb{Z})$ on $T^{r}$ is trivial for $r<n$. We want to
study the group action of $\mathrm{SL}_{n}(\mathbb{Z})$ on $T^{r}$ by
homeomorphisms. Before proving Theorem \ref{torus}, we prove a general
result on group actions on manifolds and covering spaces.

Let $M$ be a connected manifold and let $G$ be a subgroup the group of
homeomorphisms of $M.$ Suppose that $p:M^{\prime }\rightarrow M$ is a
universal covering of connected manifolds with deck transformation group $%
\pi $ and denote by $G^{\prime }$ all homeomorphisms of $M^{\prime }$
covering those in $G$ (cf. \cite{bre}, Theorem 9.3, page 66). The group $%
G^{\prime }$ fits into an exact sequence%
\begin{equation}
1\rightarrow \pi \rightarrow G^{\prime }\rightarrow G\rightarrow 1.  \tag{*}
\end{equation}%
When the group action is by diffeomorphisms (piecewise linear
homeomorphisms, resp.), we always assume that the manifolds are smooth
(piecewise linear, resp.).

\begin{theorem}
\label{cover}Let $G$ be a group and $p:M^{\prime }\rightarrow M$ be a
universal covering of connected manifolds with an abelian group $\pi $ as
the deck transformation group. Suppose that

\begin{enumerate}
\item[(i)] any group action of $G$ on $M^{\prime }$ by homeomorphisms (resp.
diffeomorphisms, PL homeomorphisms) is trivial;

\item[(ii)] for any surjective group homomorphism $f:G\rightarrow Q,$ the
second cohomology group $H^{2}(Q;\pi )=0,$ if there is an effective action
of $Q$ on $M.$ Here $Q\ $acts on $\pi $ through the exact sequence (*) (as $%
G $ does).
\end{enumerate}

Then any group action of $G$ on $M$ by homeomorphisms (resp.
diffeomorphisms, PL homeomorphisms) is trivial.
\end{theorem}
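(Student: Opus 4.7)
The plan is to lift any $G$-action on $M$ to a $G$-action on the universal cover $M^{\prime}$ by splitting the resulting extension cohomologically, and then apply hypothesis (i). Let $\phi\colon G\to\mathrm{Homeo}(M)$ be the given action (or into $\mathrm{Diff}(M)$, or into the PL homeomorphism group, depending on the regularity to be proved). Writing $Q=\phi(G)$, we obtain a surjection $G\twoheadrightarrow Q$ with $Q$ acting effectively on $M$. Applying the covering-space construction that produced (*) to $Q$ in place of $G$ gives a short exact sequence
$$1\to\pi\to Q'\to Q\to 1,$$
where $Q'\subseteq\mathrm{Homeo}(M^{\prime})$ is the group of all lifts of elements of $Q$ to $M^{\prime}$; since $p$ is a smooth (resp.\ PL) covering, the lifts remain diffeomorphisms (resp.\ PL homeomorphisms) in the smooth or PL case, and existence of lifts is guaranteed by simple-connectivity of $M^{\prime}$.

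Since $\pi$ is abelian, conjugation inside $Q'$ induces a canonical $Q$-action on $\pi$, and by construction this coincides with the action that $Q$ inherits from (*) via $G\twoheadrightarrow Q$. Hypothesis (ii) therefore applies and gives $H^2(Q;\pi)=0$, and the classical correspondence between group extensions with abelian kernel and second cohomology supplies a homomorphic splitting $s\colon Q\to Q'$. The composition
$$\psi:=s\circ\phi\colon G\longrightarrow Q'\hookrightarrow\mathrm{Homeo}(M^{\prime})$$
is then a group homomorphism defining a $G$-action on $M^{\prime}$ of the appropriate regularity, covering $\phi$.

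By hypothesis (i), the action $\psi$ must be trivial, so $s(q)=\mathrm{id}_{M^{\prime}}$ for every $q\in Q$. Since $s(q)$ is by definition a lift of $q\colon M\to M$ along $p$, the relation $p\circ s(q)=q\circ p$ forces $q=\mathrm{id}_M$, whence $Q=\{1\}$ and $\phi$ is itself trivial. The main obstacle I anticipate in executing this plan is the bookkeeping needed to identify the conjugation $Q$-action on $\pi$ inside $Q'$ with the $Q$-action on $\pi$ used to compute $H^2(Q;\pi)$ in hypothesis (ii); this amounts to verifying that the displayed extension is precisely the pullback of (*) along $G\twoheadrightarrow Q$, so that the hypothesis is being applied to the correct coefficient module. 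Once this identification is in place, the splitting provided by $H^2(Q;\pi)=0$ together with hypothesis (i) combine formally to give the conclusion.
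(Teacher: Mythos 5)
Your proposal is correct and follows essentially the same route as the paper's own proof: form the extension $1\to\pi\to Q'\to Q\to 1$ of the image $Q$ by the deck group, split it using $H^{2}(Q;\pi)=0$ from hypothesis (ii), lift the action to $M'$, and invoke hypothesis (i) to force $Q$ to be trivial. Your explicit verification that $s(q)=\mathrm{id}_{M'}$ together with $p\circ s(q)=q\circ p$ implies $q=\mathrm{id}_{M}$ merely spells out the paper's final step in slightly more detail.
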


\begin{proof}
Let $\mathrm{Homeo}(M)$ be the group of homeomorphisms of $M$ and $%
f:G\rightarrow \mathrm{Homeo}(M)$ a group homomorphism. Denote by $Q$ the
image $\func{Im}f$ and $G^{\prime }$ the group of all liftings of elements
in $Q$ to $\mathrm{Homeo}(M^{\prime }).$ We have a short exact sequence 
\begin{equation*}
1\rightarrow \pi \rightarrow G^{\prime }\rightarrow Q\rightarrow 1.
\end{equation*}%
By (ii), $H^{2}(Q;\pi )=0.$ Therefore, this exact sequence is split and $%
G^{\prime }\cong \pi \rtimes Q.$ The group $Q$ could act on $M^{\prime }$
through $G^{\prime }.$ Then the group $G$ could act on $M^{\prime }$ through 
$Q.$ However, this group action is trivial by assumption (i). Therefore, $Q$
is the trivial group. This proves that any group action of $G$ on $M$ by
homeomorphisms is trivial. The proof for the case of diffeomorphisms and PL
homeomorphisms is similar.
\end{proof}

\begin{lemma}
\label{cs}Let $\pi $ be a finitely generated abelian group without $2$%
-torsions. For any $n\geq 3$, the second cohomology group 
\begin{equation*}
H^{2}(\mathrm{SL}_{n}(\mathbb{Z});\pi )=0\text{ and }H^{2}(\mathrm{SL}_{n}(%
\mathbb{Z}/k);\pi )=0
\end{equation*}
for any nonzero integer $k.$
\end{lemma}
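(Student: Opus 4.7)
The plan is to reduce the computation of $H^{2}(G;\pi)$ to the computation of the Schur multiplier $H_{2}(G;\mathbb{Z})$, which will turn out to be a $2$-group. For $G$ one of the groups in question, the universal coefficient theorem for group cohomology gives a short exact sequence
\[
0\rightarrow \mathrm{Ext}^{1}(H_{1}(G;\mathbb{Z}),\pi )\rightarrow
H^{2}(G;\pi )\rightarrow \mathrm{Hom}(H_{2}(G;\mathbb{Z}),\pi )\rightarrow 0,
\]
so it suffices to show that $H_{1}(G;\mathbb{Z})=0$ and that $H_{2}(G;\mathbb{Z})$ is annihilated by $2$ (or at least has only $2$-torsion). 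Since $\pi$ is a finitely generated abelian group with no $2$-torsion, $\mathrm{Hom}(H_{2},\pi )$ will then vanish, and the $\mathrm{Ext}^{1}$ term will vanish automatically.

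First I would verify perfectness. For $n\geq 3$, the Steinberg commutator relation $[e_{ij}(a),e_{jk}(b)]=e_{ik}(ab)$ for three distinct indices exhibits every elementary matrix $e_{ik}(c)$ as a commutator (take $a=1,b=c$), and since the elementary matrices generate $\mathrm{SL}_{n}(\mathbb{Z})$ and $\mathrm{SL}_{n}(\mathbb{Z}/k)$, both groups coincide with their commutator subgroups. Hence $H_{1}(G;\mathbb{Z})=G^{\mathrm{ab}}=0$, which kills the $\mathrm{Ext}^{1}$ term.

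Next I would quote known computations of the Schur multiplier. For $G=\mathrm{SL}_{n}(\mathbb{Z})$ with $n\geq 3$ one has $H_{2}(G;\mathbb{Z})\cong \mathbb{Z}/2$; for $n\geq 5$ this is Milnor's $K_{2}(\mathbb{Z})=\mathbb{Z}/2$ via homology stability, and the cases $n=3,4$ are classical direct computations yielding the same answer. For $G=\mathrm{SL}_{n}(\mathbb{Z}/k)$, the Chinese Remainder Theorem gives $\mathrm{SL}_{n}(\mathbb{Z}/k)\cong \prod _{p^{a}\|k}\mathrm{SL}_{n}(\mathbb{Z}/p^{a})$, and by the Künneth formula it is enough to bound $H_{2}(\mathrm{SL}_{n}(\mathbb{Z}/p^{a});\mathbb{Z})$ prime-by-prime. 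For $p$ odd, standard central-extension results (going back to Steinberg and Matsumoto) give that this Schur multiplier is trivial for $n\geq 3$, and for $p=2$ it is a $2$-group. In every case, $H_{2}(G;\mathbb{Z})$ is a finite $2$-group, so $\mathrm{Hom}(H_{2}(G;\mathbb{Z}),\pi )=0$ because $\pi$ has no $2$-torsion. Combining these facts in the universal coefficient sequence yields $H^{2}(G;\pi )=0$.

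The main obstacle is the second-step input: pinning down that $H_{2}(\mathrm{SL}_{n}(\mathbb{Z}/p^{a});\mathbb{Z})$ is a $2$-group for every prime $p$, every $a\geq 1$, and every $n\geq 3$, including the small-rank cases where homology stability does not apply directly. This requires invoking the appropriate structural results on central extensions of $\mathrm{SL}_{n}$ over finite principal ideal rings rather than performing the calculation from scratch; once these are available the conclusion follows from the universal coefficient theorem together with the perfectness argument above.
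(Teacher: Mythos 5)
Your proposal is correct and follows essentially the same route as the paper: reduce via the universal coefficient theorem, use perfectness of the groups to kill the $\mathrm{Ext}^{1}$ term, and quote known Schur multiplier computations to see that $H_{2}$ is a $2$-group, so that $\mathrm{Hom}(H_{2},\pi)=0$ when $\pi$ has no $2$-torsion. The only differences are cosmetic: the paper cites Dennis--Stein directly for $H_{2}(\mathrm{SL}_{n}(\mathbb{Z}/k))$ (equal to $\mathbb{Z}/2$ when $4\mid k$ and $0$ otherwise) instead of your CRT/K\"unneth reduction to prime powers, and your assertion that $H_{2}(\mathrm{SL}_{n}(\mathbb{Z}))\cong\mathbb{Z}/2$ also for $n=3,4$ is slightly inaccurate (van der Kallen's computation gives a $(\mathbb{Z}/2)^{2}$ there), but this is harmless since only the fact that the multiplier is a $2$-group is used.
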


\begin{proof}
By van der Kallen \cite{van}, the second homology group $H_{2}(\mathrm{SL}%
_{n}(\mathbb{Z});\mathbb{Z})=\mathbb{Z}/2$ when $n\geq 5$ and $H_{2}(\mathrm{%
SL}_{3}(\mathbb{Z});\mathbb{Z})=H_{2}(\mathrm{SL}_{4}(\mathbb{Z});\mathbb{Z}%
)=\mathbb{Z}/2\bigoplus \mathbb{Z}/2$. Since $\mathrm{SL}_{n}(\mathbb{Z})$
is perfect, $H_{1}(\mathrm{SL}_{n}(\mathbb{Z});\mathbb{Z})=0$ for any $n\geq
3.$ By universal coefficient theorem, $H^{2}(\mathrm{SL}_{n}(\mathbb{Z});\pi
)=0$ for any $n\geq 3.$ Dennis and Stein proved that $H_{2}(\mathrm{SL}_{n}(%
\mathbb{Z}/k))=\mathbb{Z}/2$, for $k\equiv 0(\mathrm{mod}4)$, while $H_{2}(%
\mathrm{SL}_{n}(\mathbb{Z}/k))=0$, otherwise (cf. \cite{ds} corollary 10.2).
By universal coefficient theorem again, $H^{2}(\mathrm{SL}_{n}(\mathbb{Z}%
/k);\pi )=0$ for any $k.$
\end{proof}

\bigskip

\begin{proof}[Proof of Theorem \protect\ref{main2}]
It suffices to check that the two conditions in Theorem \ref{cover} are
satisfied. It is obvious that (i) in Theorem \ref{cover} is same as (i) in
Theorem \ref{main2} by taking $G=\mathrm{SL}_{n}(\mathbb{Z}).$ Since any
group homomorphism $\mathrm{SL}_{n}(\mathbb{Z})\rightarrow \mathrm{SL}_{k}(%
\mathbb{Z})$ $(k<n)$ is trivial (cf. \cite{we} or \cite{ye}, Corollary
1.11), the group $\mathrm{SL}_{n}(\mathbb{Z})$ can only act on $\mathbb{Z}%
^{k}$ trivially. Denote by $\pi =\mathbb{Z}^{k}.$ Let $f:\mathrm{SL}_{n}(%
\mathbb{Z})\rightarrow Q$ be any surjective homomorphism. If $f$ is
injective, $H^{2}(\mathrm{SL}_{n}(\mathbb{Z});\pi )=0$ by Lemma \ref{cs}. If 
$f$ is not injective, the congruence subgroup property \cite{bsm} implies
that $Q$ is a quotient of $\mathrm{SL}_{n}(\mathbb{Z}/k)$ by a central
subgroup $K$ for some integer $k.$ From the Serre spectral sequence 
\begin{equation*}
H^{p}(Q;H^{q}(K;\pi ))\Longrightarrow H^{p+q}(\mathrm{SL}_{n+1}(\mathbb{Z}%
/k);\pi ),
\end{equation*}%
we have the exact sequence%
\begin{eqnarray*}
0 &\rightarrow &H^{1}(Q;\pi )\rightarrow H^{1}(\mathrm{SL}_{n}(\mathbb{Z}%
/k);\pi )\rightarrow H^{0}(Q;H^{1}(K;\pi )) \\
&\rightarrow &H^{2}(Q;\pi )\rightarrow H^{2}(\mathrm{SL}_{n}(\mathbb{Z}%
/k);\pi ).
\end{eqnarray*}%
This implies that $H^{2}(Q;\pi )\cong H^{1}(K;\pi )=0,$ by Lemma \ref{cs}.
Therefore, condition (ii) of Theorem \ref{cover} is satisfied and any group
action of $\mathrm{SL}_{n}(\mathbb{Z})$ on $M$ is trivial.
\end{proof}

\bigskip\ 

\begin{proof}[Proof of Corollary \protect\ref{torus}]
Denote by $M^{\prime }=\mathbb{R}^{r}$, $\mathbb{R}^{r_{1}}\times S^{r_{2}}$
or $\mathbb{R}^{r_{0}}\times S^{r_{1}}\times S^{r_{2}},$ a universal cover
of $M.$ Bridson and Vogtmann \cite{bv} prove that when $r<n,$ any group
action of $\mathrm{SL}_{n}(\mathbb{Z})$ on $\mathbb{R}^{r}$ by homeomorphism
is trivial. Theorem \ref{main2} implies (i). When $r_{1}+r_{2}<n-1,$ any
group action of $\mathrm{SL}_{n}(\mathbb{Z})$ on $\mathbb{R}^{r_{1}}\times
S^{r_{2}},$ or $\mathbb{R}^{r_{0}}\times S^{r_{1}}\times S^{r_{2}}$ is
trivial by Theorem \ref{main}. Therefore, the statements (ii) and (iii)
follows Theorem \ref{main2} directly.
\end{proof}

\bigskip

\bigskip

Department of Mathematical Sciences, Xi'an Jiaotong-Liverpool University,
111 Ren Ai Road, Suzhou, Jiangsu 215123, China.

E-mail: Shengkui.Ye@xjtlu.edu.cn

\end{document}